\theoremstyle{plain}
\numberwithin{equation}{section}
\newtheorem{theorem}{Theorem}[section]
\newtheorem*{maintheorem*}{Main Theorem}
\newtheorem{lemma}[theorem]{Lemma}
\newtheorem{corollary}[theorem]{Corollary}
\theoremstyle{definition}
\newtheorem{definition}[theorem]{Definition}
\newtheorem{example}[theorem]{Example}
\newtheorem{remark}[theorem]{Remark}
\newtheorem*{claim*}{Claim}
\newtheorem*{example*}{Example}
\newtheorem*{remark*}{Remark}
\newcommand{\R}{\mathbf{R}}
\newcommand{\Rn}{\R^n}
\DeclareMathOperator{\vol}{vol}
\newcommand{\dom}{\mathop{\rm Dom}}
\newcommand{\0}{{0}}
\newcommand\cexp{\hbox{$c$-Exp}}
\newcommand\cBexp{\hbox{$c_B$-Exp}}
\newcommand\cMexp{\hbox{$c_M$-Exp}}
\newcommand\dist{\hbox{\rm dist}}
\newcommand\cutlocus{\hbox{\rm cut-locus}}
\newcommand\cross{\hbox{\rm cross}}
\newcommand\qec{\hfill $\triangle$ \medskip}
\begin{document}

\title[curvature of optimal transport]
{Towards the smoothness of optimal maps on Riemannian submersions and Riemannian products
 (of round spheres in particular)}

\author{Young-Heon Kim
and Robert J. McCann
}

\address{Department of Mathematics, University of Toronto\\
  Toronto, Ontario Canada M5S 2E4}

\email{yhkim@math.toronto.edu,
mccann@math.toronto.edu
}
\date{\today}

\thanks{This research was supported in part by Natural Sciences and Engineering
Research Council of Canada Grant 217006-03 and United States National Science Foundation
Grant DMS-0354729.\\ \indent
\copyright 2008 by the authors.}

\subjclass[2000]{49N60, 35Jxx, 58E17, 90B06}

\begin{abstract}
The key condition {\bf A3w} of Ma, Trudinger and Wang for regularity of optimal
transportation maps is implied by the nonnegativity of a pseudo-Riemannian curvature ---
which we call \emph{cross-curvature} --- induced by the transportation cost.
For the Riemannian distance squared cost,
it is shown that (1) cross-curvature nonnegativity
is preserved for products of two manifolds; (2) both {\bf A3w} and cross-curvature
nonnegativity are inherited by Riemannian submersions; and (3) the $n$-dimensional round
sphere satisfies cross-curvature nonnegativity.
From these results, a large new class of  Riemannian manifolds satisfying
cross-curvature nonnegativity (thus {\bf A3w}) is obtained, including many whose sectional
curvature is far from constant. All known obstructions to the regularity of optimal maps
are absent from these manifolds, making them a class for which it is natural to conjecture
that regularity holds.  This conjecture is confirmed for
complex projective space $\mathbf{CP}^n$.
\end{abstract}
\maketitle

\section{Introduction}
This paper addresses questions in optimal transportation theory and Riemannian geometry.
For a general introduction to these subjects we refer to the books by
Villani \cite{V} \cite{V2} for optimal transport theory and the book by
Cheeger and Ebin \cite{CE} for Riemannian geometry.

\subsection{Background: optimal transport and pseudo-Riemannian geometry}
In optimal transportation theory one is interested in phenomena which occur when moving mass
distributions so as to minimize the transportation cost.
Mathematically, there are source and target domains, $M$, $\bar M$, two differential
manifolds equipped with a lower semi-continuous cost function
$c: M \times \bar M \to \R \cup \{\infty\}$. Given two  positive Borel probability measures
$\rho$, $\bar \rho$ on $M$, $\bar M$, respectively, one wants to understand the
optimal map $F: M \to \bar M$, which minimizes the average cost
\begin{align*}
\int_M c(x, F(x)) d\rho(x)
\end{align*}
among all such maps pushing-forward $\rho$ to $\bar \rho$,
denoted $F_\# \rho = \bar \rho$ and meaning that
\begin{align*}
\rho (F^{-1} (E)) = \bar\rho (E) \qquad \forall \ \hbox{Borel} \ E \subset \bar M.
\end{align*}

Particular attention has been devoted to the case $M=\bar M$, a complete
Riemannian manifold with
the cost $c=\frac{1}{2}\dist^2$, where $\dist$ denotes the Riemannian distance function.
Existence and uniqueness of optimal maps in this case is well known due to the work of
Brenier \cite{B} and McCann \cite{M} (and Figalli \cite{F} for noncompact manifolds),
under the condition that $\rho$  doesn't charge
lower dimensional submanifolds (see also \cite{ChiapporiMcCannNesheim07p}).
Under suitable hypotheses,
regularity ($C^0/C^{\alpha}/C^\infty$) of such optimal maps is known for Euclidean
space $M=\Rn$ by results of Delano\"e \cite{D}, Caffarelli \cite{C} \cite{C2},
and Urbas \cite{U}, and for flat \cite{Co} and near flat \cite{D2}
manifolds by Cordero-Erausquin and Delano\"e. Beyond the flat case,
Loeper \cite{L2} deduced regularity on the round sphere, by combining
his own breakthroughs with pioneering results of
Ma, Trudinger and Wang \cite{MTW}\cite{TW}\cite{TW2}
concerning regularity of optimal maps for general cost functions.
Loeper's result
is simplified in the work of present authors 
\cite{KM2}
where we give an elementary
and direct proof of a crucial maximum principle deduced by Loeper from Trudinger and
Wang's theory. We referred to Loeper's maximum principle with the acronym
{\bf DASM} (see Theorem~\ref{T:cross and DASM}), and extended it to the manifold case.
In the course of deriving new results,
our method was employed and further developed by Figalli and Villani \cite{FV}, Figalli and Rifford \cite{FR},
Loeper and Villani \cite{LV}, and Villani \cite{V2} \cite{V3}.

Our contributions in \cite{KM2} are based on a pseudo-Riemannian geometric structure $h$ on $M \times \bar M$ induced by the transportation cost.
Namely when $\dim M = \dim \bar M$ this pseudo-metric $h$ is defined on
$N \subset M \times \bar M$ with $c \in C^4(N)$ as the following
symmetric 
bilinear form on $TM \oplus T\bar M$: \begin{equation}\label{metric h}
h := \left(
\begin{array}{cc}
0 & -\frac{1}{2}\bar D D c \\
-\frac{1}{2}D \bar D c & 0
\end{array}
\right)
\end{equation}
where
$D$ and $\bar D$ denote the differentials along $M$ and $\bar M$, respectively.
For non-degeneracy of $h$ we assume that $D\bar D c$ and its adjoint $\bar D D c$ are
non-degenerate (Ma, Trudinger and Wang's condition {\bf A2}: see Section~\ref{S:pre}).
This is automatically true for the Riemannian distance squared cost, away from the cut-locus,
since $\bar D D c$ is a matrix of independent Jacobi fields.
This pseudo-metric $h$ geometrizes the regularity theory of optimal maps, by recasting the
key cost hypothesis {\bf A3w} of Ma, Trudinger and Wang \cite{MTW}\cite{TW} for regularity
as the non-negativity of certain pseudo-Riemannian sectional curvatures of $h$. To explain this condition more precisely, we need
\begin{definition}[\bf cross-curvature]\label{D:cross-curv}
Let $(x, \bar x) \in N \subset M \times \bar M$. For each $p\oplus \bar p \in T_{(x,\bar x)} N = T_x M \oplus T_{\bar x} \bar M$, \emph{the cross-curvature} of $p$ and $\bar p$ is defined as
\begin{align}\label{cross-curv}
\cross_{(x,\bar x)}^{(N,h)} (p, \bar p) = R_h ((p\oplus 0 )\wedge (0 \oplus \bar p), (p\oplus 0) \wedge (0 \oplus \bar p))
\end{align}
where $R_h$ is the Riemann curvature operator of the pseudo-metric $h$. We drop the superscript $(N,h)$ and the subscript $(x, \bar x)$ when no ambiguity can occur.
\end{definition}
\noindent Ma, Trudinger and Wang's {\bf A3w} condition and its strict version {\bf A3s}
then assert
\begin{align}\label{A3w}
\hbox{{\bf A3w:}} &  \ \ \ \ \ \hbox{$\cross (p, \bar p) \ge 0$ for all $p \oplus \bar p$ with $h(p\oplus \bar p, p \oplus \bar p) = 0$;}\\ \label{A3s}
\hbox{{\bf A3s:}} & \ \ \ \ \ \hbox{in addition, $\cross(p, \bar p) = 0$ in (\ref{A3w})
implies  $p =0$ or $\bar p =0$}.
\end{align}
{\bf A3w} and {\bf A3s} are also called \emph{weak regularity} and \emph{strict regularity}, respectively.
If $(N, h)$ satisfies the inequality $\cross(p,\bar p) \ge 0$ for all $p \oplus \bar p$ (whether or not
$h(p\oplus\bar p, p\oplus\bar p )$ vanishes), then $(N,h)$ is said to be \emph{non-negatively cross-curved}.
Loeper \cite{L} showed that {\bf A3w} is necessary and {\bf A3s} sufficient
for continuity of optimal maps between suitable measures: without {\bf A3w}
there are discontinuous optimal maps between smooth measures $\rho$, $\bar \rho$
on nice domains.
Trudinger and Wang \cite{TW} had already shown the sufficiency of {\bf A3w}
for
continuity (indeed smooth differentiability) of optimal mappings, under much stronger smoothness and convexity restrictions
on $\rho$ and $\bar \rho$.
These restrictions on $\rho$, $\bar \rho$ are relaxed in two dimensions by Figalli and Loeper \cite{FL}, where they showed a continuity result of optimal maps under {\bf A3w}. It still remains an open question to show such continuity result in higher dimensions.
However, in a separate work with Figalli \cite{FKM}, we show non-negative
cross-curvature allows these 
smoothness and convexity restrictions to be relaxed without sacrificing
H\"older continuity of optimal maps and without assuming {\bf A3s},
thus obtaining a continuity theory for smooth cost functions
parallelling and extending Caffarelli's $c(x,y) = \frac{1}{2} |x-y|^2$ result \cite{C}.


For the Riemannian distance squared cost $c(x, \bar x) = \mathop{\rm dist}^2 (x, \bar x)/2$
on a Riemannain manifold $M = \bar M$, an isometric copy of
$M$ is embedded totally geodesically as the diagonal of
$M \times M$ with respect to the pseudo-Riemannian metric $h$.
Along this diagonal,
\begin{align*}
\cross_{(x,x)} (p, \bar p) = \frac{4}{3}
R_M (p \wedge \bar p, p \wedge \bar p)
\end{align*}
where $R_M$ denotes the curvature operator of $M$
(see \cite{KM2} for details).  This provides some geometric
intuition motivating Loeper's result \cite{L} that
{\bf A3w} implies nonnegative sectional curvature of the Riemannian metric. (However, the nonnegative/positive curvature does not imply {\bf A3w}, as shown by a counterexample in \cite{K}.)
Loeper also verified {\bf A3s} for the standard round sphere \cite{L2} and
used it to obtain $C^{1/\max\{5,4n-1\}}$ and $C^\infty$ regularity results for optimal maps
in this spherical setting.

\subsection{Main results}
Throughout this paper, if not specified, each Riemannian manifold $M$ is assumed to be complete and to be
equipped with the cost function  $c= \frac{1}{2}\dist^2$ and this cost induces the
pseudo metric $h$ on $N= M \times M \setminus \cutlocus$. A Riemannian
manifold $M$ is said to be {\bf A3w} / {\bf A3s} / non-negatively cross-curved if $(N, h)$
satisfies the corresponding cross-curvature condition.

Our main results provide methods of generating new examples of Riemannian manifolds
which satisfy  non-negative cross-curvature and thus {\bf A3w}.
As announced in \cite{KM2}, we show:

\begin{theorem}{\bf (Products and submersions, particularly of round spheres)}\label{T:main}

\noindent 1) $S^n$ with its standard round metric is non-negatively cross-curved.

\noindent 2) Let $\pi: \tilde M \to M$ be a Riemannian submersion (see Definition~\ref{D:submersion}).
If $\tilde M$ is {\bf A3w} / {\bf A3s} / non-negatively cross-curved then so is $M$.

\noindent 3) For product $M_+ \times M_-$ of Riemannian manifolds, if each factor
$M_\pm$ is  non-negatively cross-curved, then the resulting manifold $M_+ \times M_-$
is non-negatively cross-curved, thus {\bf A3w} holds (but never {\bf A3s}).

\noindent 4) Moreover, if either of the factors above fail to be non-negatively cross-curved then the
product $M_+ \times M_-$ fails to be {\bf A3w}.
\end{theorem}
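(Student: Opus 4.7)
The plan is to address parts (3), (4), (2), and (1) in that order. For parts (3) and (4), the key observation is that $c_{M_+\times M_-}((x_+,x_-),(\bar x_+,\bar x_-)) = c_+(x_+,\bar x_+) + c_-(x_-,\bar x_-)$, so by \eqref{metric h} the pseudo-metric splits as a direct sum $h = h_{M_+}\oplus h_{M_-}$ along $(TM_+\oplus T\bar M_+)\oplus(TM_-\oplus T\bar M_-)$. The Levi-Civita connection and Riemann tensor of $h$ inherit this block structure, so $R_h$ vanishes unless all four of its arguments lie in a single factor. Decomposing $(p\oplus 0)\wedge(0\oplus\bar p)$ with $p = p_+ + p_-$ and $\bar p = \bar p_+ + \bar p_-$ into four wedge pieces indexed by $(\pm,\pm)$ and applying this selection rule, only the pure-type pieces contribute:
\begin{equation*}
\cross^{M_+\times M_-}(p,\bar p) = \cross^{M_+}(p_+,\bar p_+) + \cross^{M_-}(p_-,\bar p_-).
\end{equation*}
Part (3) is then immediate; \textbf{A3s} fails because for any nonzero $p\in TM_-\subset TM$ and $\bar p\in T\bar M_+\subset T\bar M$, the vector $p\oplus\bar p$ is $h$-null with vanishing cross-curvature, yet neither $p$ nor $\bar p$ is zero. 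For part (4), suppose $\cross^{M_+}(p_+,\bar p_+)<0$ for some $p_+,\bar p_+$. Choose any diagonal point $(x_-,x_-)\in M_-\times M_-$ and $v\in T_{x_-}M_-$, and set $p_- := v$, $\bar p_- := \lambda v$. Then $p_-\wedge\bar p_- = 0$, so the diagonal formula $\cross = \tfrac{4}{3}R_M(p\wedge\bar p,\cdot)$ recalled in the introduction gives $\cross^{M_-}(p_-,\bar p_-)=0$, while $h_{M_-}(p_-\oplus\bar p_-,p_-\oplus\bar p_-) = \lambda\, g(v,v)$ realizes every real value as $\lambda$ varies. Choose $\lambda$ to cancel $h_{M_+}(p_+\oplus\bar p_+,p_+\oplus\bar p_+)$; the resulting product vector is $h$-null with strictly negative cross-curvature, violating \textbf{A3w}.

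For Riemannian submersions (part 2), given $(x,\bar x)\in N\subset M\times M$ and $p\in T_xM$, $\bar p\in T_{\bar x}M$, pick any $\tilde x\in\pi^{-1}(x)$ and let $\tilde{\bar x}$ be the endpoint of the horizontal lift at $\tilde x$ of the minimizing geodesic from $x$ to $\bar x$ in $M$, so $\tilde x$ and $\tilde{\bar x}$ are joined by a horizontal $\tilde M$-geodesic of the same length. Horizontally lift $p$ at $\tilde x$ to $\tilde p$ and $\bar p$ at $\tilde{\bar x}$ to $\tilde{\bar p}$. The key geometric fact is that $c_M(\pi\tilde y,\pi\tilde{\bar y})\le c_{\tilde M}(\tilde y,\tilde{\bar y})$ globally, with equality on the locus of pairs joined by a horizontal geodesic --- a smooth submanifold through $(\tilde x,\tilde{\bar x})$ whose tangent space is precisely the horizontal-horizontal subspace. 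Hence $(\pi\times\pi)^*c_M$ agrees with $c_{\tilde M}$ to all orders in horizontal directions at $(\tilde x,\tilde{\bar x})$, which forces $h_M$ pulled back to horizontal vectors to match the horizontal restriction of $h_{\tilde M}$, along with all higher-order tensorial ingredients defining cross-curvature. Consequently $\cross^M(p,\bar p) = \cross^{\tilde M}(\tilde p,\tilde{\bar p})$, the $h$-null condition transfers verbatim, and --- since horizontal lifts of nonzero vectors are nonzero --- each of the three inheritance claims (\textbf{A3w}, \textbf{A3s}, non-negative cross-curvature) follows. The main obstacle is justifying the matching-of-derivatives step rigorously, essentially an O'Neill-type argument for the pseudo-metric $h$.

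Part (1), the round sphere $S^n$, is the most technical. Using the embedding $S^n\subset\R^{n+1}$, write $c(x,\bar x) = \tfrac12\arccos^2(x\cdot\bar x)$ and exploit transitivity of the isometry group on configurations $(x,\bar x,p,\bar p)$ modulo the distance $r = \dist(x,\bar x)\in[0,\pi)$ and a small number of angular parameters. Computing $D\bar D c$ along the minimizing geodesic in standard Jacobi-field form (trigonometric in $r$) and iterating derivatives produces a closed expression for the cross-curvature as a function of $r$ and these angles. At $r=0$ it reduces to $\tfrac43$ times the sectional curvature, hence is positive. The main obstacle is verifying non-negativity of the full expression on the \emph{entire} tangent space at $(x,\bar x)$ --- not merely on the $h$-null cone, to which \textbf{A3w} restricts --- for every $r\in[0,\pi)$ and every admissible angular configuration; delicate trigonometric estimates sharpening those in Loeper's \textbf{A3s} argument are needed to retain sign control as $r\to\pi^-$. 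Once established, parts (2) and (3) assemble the advertised class of non-negatively cross-curved manifolds, including $\mathbf{CP}^n$ (a Riemannian quotient of $S^{2n+1}$) and products of such.
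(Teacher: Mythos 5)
Your parts (3) and (4) are essentially correct and match the paper's route: the block splitting of $h$ gives the additivity of cross-curvature, \textbf{A3s} fails on mixed null vectors, and your choice of a diagonal point with $p_-=v$, $\bar p_-=\lambda v$ (so the $h$-value $\lambda|v|^2$ sweeps both signs while the cross-curvature vanishes) is a harmless variant of the paper's Lemma~\ref{L:zero cross-curvature}. Part (2), however, has a genuine gap. The step ``$(\pi\times\pi)^*c_M$ agrees with $c_{\tilde M}$ to all orders in horizontal directions, hence $\cross^{M}(p,\bar p)=\cross^{\tilde M}(\tilde p,\tilde{\bar p})$'' is false. Agreement of the two costs on the locus of horizontally-joined pairs only controls derivatives taken along that locus, whereas the fourth mixed derivatives defining cross-curvature are computed along comparison surfaces $(\tilde x(s),\tilde{\bar x}(t))$ which leave the locus as soon as $s,t\neq 0$. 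Indeed equality fails outright: at the diagonal $\cross_{(x,x)}(p,\bar p)=\frac{4}{3}R(p\wedge\bar p,p\wedge\bar p)$, and for the Hopf fibration $S^{2m+1}\to\mathbf{CP}^m$ a holomorphic plane gives $\frac{4}{3}\cdot 4$ on the base against $\frac{4}{3}\cdot 1$ for its horizontal lift; only an O'Neill-type \emph{inequality} $\cross_B\ge\cross_M$ holds (fortunately the direction needed). The paper proves exactly this inequality (Theorem~\ref{T:cross and submersion}) by a different mechanism: the lifted vectors are chosen so that the associated \emph{covectors} $-\bar D D c_M\,\tilde{\bar w}$ and $-D\bar D c_M\,\tilde w$ are horizontal lifts of the corresponding base covectors (not horizontal lifts of $p,\bar p$ themselves, as you take), the surface is built from $c$-exponentials so that both axes are $h$-geodesics on which $F(s,t):=c_M(\tilde x(s),\tilde{\bar x}(t))-c_B(x(s),\bar x(t))$ vanishes, and then $F\ge 0$ forces, via the Taylor expansion, $f_{11}=0$ --- which yields $h_B(v\oplus\bar v,v\oplus\bar v)=h_M(\tilde w\oplus\tilde{\bar w},\tilde w\oplus\tilde{\bar w})$ and is what lets null vectors (hence \textbf{A3w}/\textbf{A3s}) transfer --- and $f_{22}\ge 0$, which is the curvature inequality by Lemma~\ref{L:mixed fourth derivative}. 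Once your (false) equality claim is removed, your argument establishes neither the metric identity nor the inequality.

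Part (1) as written is a plan rather than a proof: the entire content of the theorem is precisely the ``delicate trigonometric estimates'' you defer. The paper carries them out: writing the Hessian as $H=|w|^2-IG$ with $G=1-\rho\cos\rho/\sin\rho$, differentiating twice along the $c$-segment, reducing to dimension two by splitting off $w^\perp$ (whose coefficient $\ddot G$ is positive), and showing a quadratic form in $\tan\psi$ is positive through a discriminant computation that ultimately rests on the inequality $a(\rho)=\sin^2\rho+\rho\sin\rho-\rho^2(1+\cos\rho)\ge 0$ for $0\le\rho\le\pi$ (Lemma~\ref{the function}), with the equality analysis giving the ``almost positive'' refinement. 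Without supplying this analysis (or an equivalent), the non-negative cross-curvature of $S^n$ --- and hence the advertised class of examples built from parts (2) and (3) --- remains unproven in your proposal.
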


\begin{proof}
The proof of assertion 1) is a calculation given in Section~\ref{S:sphere cross-curv}.
Assertion 2) is shown in Section~\ref{S:submersion}. Assertion 3) and 4) are easy facts
which are explained in Section~\ref{S:product} in detail.
\end{proof}

\begin{remark}
Following our announcement of Theorem~\ref{T:main} \cite{KM2},
Figalli and Rifford \cite{FR} gave an alternate proof of result 1)
in a different and slightly stronger form.
\end{remark}

As a byproduct of our method, we obtain an O'Neill type inequality for cross-curvature in
Riemannian submersions (see Theorem~\ref{T:cross and submersion}). This verifies that
Riemannian submersion quotients of the round sphere all satisfy {\bf A3s}, from which
the regularity of optimal maps follows on notable examples such as
on complex projective spaces $\mathbf{CP}^n$ with the Fubini-Study metric:
see Section~\ref{S:regularity and submersion}.
This is a new result which is not covered
by other discrete quotient cases of Delano\"e and Ge \cite{DG} or Figalli and Rifford \cite{FR}.

As another important consequence, the
Riemannian product of round spheres and Euclidean space
${\mathbf S}^{n_1} \times \cdots \times {\mathbf S}^{n_k}\times \R^l$ and its
\emph{Riemannian submersion quotients}, all satisfy cross-curvature non-negativity and
thus {\bf A3w}.
Since this rules out the known counterexamples to regularity
\cite{L}, optimism combines with a lack of imagination to lead to the conjecture
that regularity of optimal mappings also holds in such settings.
Together with the perturbations 
\cite{DG}\cite{LV}\cite{FR}
of the round sphere and its discrete quotients discussed by 
Delano\"e and Ge, Loeper and Villani,
and Figalli and Rifford,  for which the continuity \cite{FR} or regularity \cite{LV}\cite{DG} of optimal maps is already shown,
these presently form the only examples of non-flat Riemannian manifolds
which are known to be {\bf A3w}. (In flat manifold case, {\bf A3w} is trivial, and in fact the cross-curvature vanishes everywhere. Regularity of optimal maps in this flat case is known by Cordero-Erausquin \cite{Co} applying Caffarelli's result \cite{C}\cite{C2}.)

As far as we know, it remains an
open challenge to show regularity of optimal maps on the new tensor product type examples
${\mathbf S}^{n_1} \times \cdots \times {\mathbf S}^{n_k}\times \R^l$
(when the supports of the source and target measure are the whole domain).
However, Loeper's maximum principle (and a stronger convexity statement)
is easily verified on these examples
using the results and methods of \cite{KM2}: see
Corollary~\ref{C:cross and global time-convex} and Remark~\ref{R:products and DASM}.
We hope these key ingredients will make it possible to address the regularity issue
in a subsequent work.



This paper is organized as follows: Section~\ref{S:pre} is devoted to preliminary notions
and facts. Some important geometric implications of cross-curvature non-negativity
(Theorem~\ref{T:cross and local time-convex} and
Corollary~\ref{C:cross and global time-convex}) are shown also.
In Section~\ref{S:product}, the tensor product construction of costs is explained.
Section~\ref{S:submersion} discusses the relation between Riemannian submersion and
cross-curvature. Notably, we derive an O'Neil type inequality for cross-curvature
(see Theorem~\ref{T:cross and submersion}). Section~\ref{S:regularity and submersion} shows regularity of optimal maps on the complex projective spaces ${\bf CP}^n$.
Section~\ref{S:sphere cross-curv}
shows the cross-curvature non-negativity of the standard round
sphere.

\subsection*{Acknowledgements}
We thank
Philippe Delano\"e, Yuxin Ge, Alessio Figalli,
Gr\'egoire Loeper, Neil Trudinger, Cedric Villani and Xu-Jia Wang for useful discussions and timely exchanges of preprints. We are also grateful to
the participants of Fields Analysis Working Group Seminar 2006--2008 for the
stimulating research environment they helped to create.

\section{Preliminaries}\label{S:pre}
In this section, we recall basic terminology and facts from \cite{KM2}
(also \cite{MTW} \cite{TW} \cite{L}). Theorem~\ref{T:cross and local time-convex} and
Corollary~\ref{C:cross and global time-convex} are not stated there, but are
easy consequences of the method in \cite{KM2}, to which we also refer the reader for proofs
of the other results summarized below. For improvements to these results, see also \cite{V2}.
Let $N \subset M \times \bar M$ be an open set
where the cost function $c \in C^4 (N)$.

\begin{definition}[\bf visible sets, twist condition and non-degeneracy]\label{D:A1,2}We define the visible sets:
\begin{align*}
N(\bar x) &= \{ x \in M \ | \ (x, \bar x) \in N \},\\
\bar N (x) &= \{ \bar x \in \bar M \ | \ (x, \bar x) \in N \}.
\end{align*}
Throughout this section it is assumed as in \cite{MTW} that for all $(x, \bar x) \in N$,
\begin{align*}
\hbox{{\bf A1}} & \ \ \ \ \ \ Dc( x, \cdot ): \bar N (x) \to T^*_x M , \
\bar D c( \cdot, \bar x ): N (\bar x) \to T^*_{\bar x} \bar M \hbox{ are injective;} \\
\hbox{{\bf A2}} & \ \ \ \ \ \ D\bar D c \hbox{ is non-degenerate}.
\end{align*}
\noindent We also call {\bf A1} the \emph{(bi-)twist condition}
and {\bf A2} \emph{non-degeneracy}.
\end{definition}

We recall an important map 
of Ma, Trudinger \& Wang
\cite{MTW}, 
called the {\em cost-exponential} by Loeper \cite{L}, which coincides with the Riemannian exponential map for $c= \frac{1}{2}\dist^2$.

\begin{definition}\label{D:c-exp}{\bf (cost exponential)}
If $c \in C^2(N)$ is twisted {\bf (A1)}, we define the {\em $c$-exponential} on
\begin{align}\label{dom c-exp}\nonumber
\dom(c\text{-Exp}_{x} ) :=& - D c(x, \bar N(x)) \\
=& \{ p^* \in T^*_{x} M \  | \ p^* = - D c(x, \bar x) \hbox{ \ \ for some $\bar x \in \bar N(x)$}  \}
\end{align}
by $\cexp_{x} p^* = \bar x$ if $p^*=- D c(x,\bar x)$.
Non-degeneracy {\bf (A2)} then implies the $c$-exponential is a diffeomorphism from
$\dom(\cexp_{x}) \subset T^*_{x} M$ onto $\bar N(x) \subset \bar M$.
\end{definition}

\begin{remark}[\bf differential of $\cexp$]\label{D:differential cexp}
Linearizing the cost exponential $\bar x = \cexp_x p^*$ around $p^* \in T_x^*M$ we obtain
a map
$\cexp_{(x, \bar x)*}:T_x^* M \longrightarrow T_{\bar x} \bar M$
given explicitly by
\begin{align}\label{differntial cexp}
\hbox{$\cexp_{(x, \bar x (t) )*} (\dot{p}^* (t)) =  \dot{\bar x}(t) $ for $p^*(t)= -Dc(x, \bar x(t))$.}
\end{align}
Equivalently $\cexp_{(x, \bar x)*} = -\bar D D c(x,\bar x)^{-1}$,
meaning the inverse tensor  $\frac{1}{2}h^{-1}$ to the metric \eqref{metric h}
--- which gives the pseudo-Riemannian correspondence between the tangent and cotangent
spaces to $N \subset M \times \bar M$ --- also carries covectors forward through the
cost-exponential.
\end{remark}

\noindent The following lemma characterizes Ma, Trudinger \& Wang's
\emph{$c$-segments} as geodesics of $h$.
\begin{lemma}\label{L:c-segments are geodesics}
{\bf (the $c$-segments of \cite{MTW} are geodesics)}
Use a twisted ({\bf A1}) and non-degenerate ({\bf A2}) cost $c \in C^4(N)$ to define a pseudo-metric
(\ref{metric h}) on the domain $N \subset M \times \bar M$.
Fix ${x} \in M$.
For each line segment $(1-s)p^* + sq^* \in \dom(\cexp_x)$, $s\in [0,1]$, the curve
$$s \in [0,1] \longrightarrow %
\sigma(s) := (x,\cexp_{x} ((1-s)p^* + s q^*))$$
is an affinely parameterized null geodesic in (N,h).
Conversely, every geodesic segment in the totally geodesic submanifold $\{x\} \times \bar N(x)$
can be parameterized locally in this way.
\end{lemma}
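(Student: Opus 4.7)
The plan is to verify everything in local coordinates $(x^i,\bar x^{\bar j})$ on $N$, in which the pseudo-metric has components $h_{i\bar j}=h_{\bar j i}=-\tfrac12 c_{i\bar j}$ with pure blocks $h_{ij}=h_{\bar i\bar j}=0$. Non-degeneracy \textbf{A2} makes $c_{i\bar j}$ invertible; write $c^{i\bar j}$ for its inverse, so that $h^{i\bar j}=h^{\bar j i}=-2c^{i\bar j}$ and the pure blocks of $h^{-1}$ again vanish.

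First, I would observe that the fiber $\{x\}\times\bar N(x)$ is totally null (every tangent vector has the form $0\oplus\bar p$, and the block structure of $h$ gives $h(0\oplus\bar p,0\oplus\bar p)=0$), and that it is also totally geodesic. For the latter, a direct Koszul computation using $h_{\bar i\bar j}\equiv 0$ yields the mixed Christoffel symbols
$$\Gamma^{k}_{\bar i\bar j}=\tfrac12\, h^{k\bar l}\bigl(h_{\bar j\bar l,\bar i}+h_{\bar i\bar l,\bar j}-h_{\bar i\bar j,\bar l}\bigr)=0,$$
since each term on the right is a (derivative of a) pure barred block of $h$. Hence any geodesic of $h$ starting in the fiber with velocity in $0\oplus T_{\bar x}\bar M$ stays in the fiber for all time.

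Second, along the fiber the purely barred Christoffel symbols are
$$\Gamma^{\bar k}_{\bar i\bar j}=\tfrac12\, h^{\bar k l}\bigl(h_{\bar j l,\bar i}+h_{\bar i l,\bar j}\bigr)=c^{l\bar k}\,c_{l\bar i\bar j}.$$
Contracting the geodesic equation $\ddot{\bar x}^{\bar k}+\Gamma^{\bar k}_{\bar i\bar j}\dot{\bar x}^{\bar i}\dot{\bar x}^{\bar j}=0$ with $c_{m\bar k}$ gives
$$c_{m\bar k}\ddot{\bar x}^{\bar k}+c_{m\bar i\bar j}\dot{\bar x}^{\bar i}\dot{\bar x}^{\bar j}=\frac{d^{2}}{ds^{2}}D_{m}c\bigl(x,\bar x(s)\bigr)=0,$$
so the geodesic equation on the fiber is exactly the statement that $Dc(x,\bar x(s))$ is affine in $s$. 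By the definition \eqref{dom c-exp} of $\cexp_x$, this is precisely the condition $\bar x(s)=\cexp_x\bigl((1-s)p^{*}+sq^{*}\bigr)$ with $p^{*}=-Dc(x,\bar x(0))$ and $q^{*}=-Dc(x,\bar x(1))$.

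Both halves of the lemma follow. Forward: the curve in the statement has $-Dc(x,\bar x(s))=(1-s)p^{*}+sq^{*}$ affine by construction, so it satisfies the geodesic equation in its affine parameter $s$, and is null because its velocity lies in $0\oplus T_{\bar x(s)}\bar M$. Converse: any geodesic of $(N,h)$ lying in $\{x\}\times\bar N(x)$ satisfies the same ODE, hence $Dc(x,\bar x(s))$ is affine in $s$, which locally exhibits the geodesic as a $c$-segment. The only real work is the short Christoffel computation above; no serious obstacle arises since \textbf{A2} supplies invertibility of $c_{i\bar j}$ throughout $N$ and the block structure of $h$ causes most candidate terms to vanish.
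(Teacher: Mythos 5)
Your proof is correct and follows essentially the same route the paper (and its reference [KM]) takes: computing the block-structure Christoffel symbols of $h$ and showing the geodesic equation on the null fiber $\{x\}\times\bar N(x)$ is exactly the statement that $-Dc(x,\bar x(s))$ is affine in $s$, which is the paper's equation \eqref{h-geodesic equation} obtained by differentiating \eqref{c-segment} twice. The sign bookkeeping in your Christoffel formulas checks out ($\Gamma^{k}_{\bar i\bar j}=0$ and $\Gamma^{\bar k}_{\bar i\bar j}=c^{l\bar k}c_{l\bar i\bar j}$), so nothing further is needed.
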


To see some relevant geodesic equations in local coordinates,  given  $s_0 \in [0,1]$,
introduce coordinates on $M$ and $\bar M$ around $\sigma(s_0)$ so that nearby, the curve
$\sigma(s)$ can be represented in the form $(x^1,\ldots,x^n,x^{\bar 1}(s),\ldots,x^{\bar n}(s))$.
Differentiating the definition of the cost exponential
\begin{equation}\label{c-segment}
0 = (1-s)p^*_i + sq^*_i + c_i(\sigma(s))
\end{equation}
twice with respect to $s$ yields
\begin{equation}\label{h-geodesic equation}
0 
= c_{i {\bar j}} \ddot x^{\bar k} + c_{i {\bar j} {\bar k} } \dot x^{\bar j} \dot x^{\bar k}
\end{equation}
for each $i=1,\ldots,n$. This equation will be used later.

Regarding curvature of the metric $h$, the following fact is fundamental.
\begin{lemma}\label{L:mixed fourth derivative}
{\bf (Non-tensorial expression for curvature)}
Use a non-degenerate cost $c \in C^4(N)$ to define a pseudo-metric
\eqref{metric h} on the domain $N \subset M \times \bar M$.
Let $(s,t) \in \mathopen[-1,1\mathclose]^2 \longrightarrow (x(s),\bar x(t)) \in N$
be a surface
containing two curves $\sigma(s)=(x(s), \bar x(0))$ and
$\tau(t) = (x(0),\bar x(t))$ through 
$(x(0),\bar x(0))$.  Then $\0 \oplus \dot {\bar x}(0)$ defines a parallel vector-field along
$\sigma(s)$. Moreover, if $s \in [-1,1] \longrightarrow \sigma(s) \in N$ is a geodesic in $(N,h)$
then
\begin{equation}\label{mixed fourth derivative}
- 2 \frac{\partial^4}{\partial s^2 \partial t^2}\bigg|_{s=0=t}
c(x(s),\bar x(t)) = \cross_{(x(0),\bar x(0))} (\dot x (0), \dot{\bar x} (0)).
\end{equation}
\end{lemma}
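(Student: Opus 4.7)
The plan is to carry out the proof in local coordinates $(x^1,\dots,x^n,x^{\bar 1},\dots,x^{\bar n})$ about $(x(0),\bar x(0))\in N$, exploiting the block-off-diagonal form of \eqref{metric h} both to compute the Christoffel symbols of $h$ and to match the fourth derivative of $c$ against the coordinate expression for $\cross$.

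\emph{Christoffel symbols and the parallelism claim.} In these coordinates the only nonzero components of $h$ are $h_{i\bar j}=-\tfrac{1}{2}c_{i\bar j}$, with inverse $h^{i\bar j}=-2c^{i\bar j}$ defined by $c^{i\bar j}c_{k\bar j}=\delta^i_k$. A direct calculation from $\Gamma^\gamma_{\alpha\beta}=\tfrac{1}{2}h^{\gamma\delta}(\partial_\alpha h_{\beta\delta}+\partial_\beta h_{\alpha\delta}-\partial_\delta h_{\alpha\beta})$ shows that only
\[
\Gamma^k_{ij}=c^{k\bar l}c_{ij\bar l},\qquad \Gamma^{\bar k}_{\bar i\bar j}=c^{\bar k l}c_{l\bar i\bar j}
\]
are nonvanishing, while every mixed-type combination $\Gamma^k_{i\bar j},\,\Gamma^{\bar k}_{i\bar j},\,\Gamma^k_{\bar i\bar j},\,\Gamma^{\bar k}_{ij}$ is identically zero; the essential cancellations use the symmetry $c_{i\bar j\bar k}=c_{i\bar k\bar j}$. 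The parallelism claim is then immediate: along $\sigma$ the velocity $\dot\sigma$ has no barred component and the coordinate-constant extension of $V=0\oplus\dot{\bar x}(0)$ has no unbarred component, so $\nabla_{\dot\sigma}V$ reduces to a contraction of the vanishing mixed Christoffel symbols.

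\emph{The fourth derivative.} Applying the chain rule twice, using that $x(s)$ depends only on $s$ and $\bar x(t)$ only on $t$, gives at $(s,t)=(0,0)$
\[
\partial_s^2\partial_t^2 c = c_{ij\bar a\bar b}\dot x^i\dot x^j\dot{\bar x}^{\bar a}\dot{\bar x}^{\bar b} + c_{i\bar a\bar b}\ddot x^i\dot{\bar x}^{\bar a}\dot{\bar x}^{\bar b} + c_{ij\bar a}\dot x^i\dot x^j\ddot{\bar x}^{\bar a} + c_{i\bar a}\ddot x^i\ddot{\bar x}^{\bar a}.
\]
Since $\sigma$ is a geodesic of $(N,h)$, the analogue of \eqref{h-geodesic equation} with barred and unbarred roles swapped gives $c_{i\bar m}\ddot x^i+c_{ij\bar m}\dot x^i\dot x^j=0$, whence $\ddot x^i=-c^{i\bar m}c_{jk\bar m}\dot x^j\dot x^k$. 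Substituting into the third and fourth terms, the coefficient of $\ddot{\bar x}^{\bar a}$ becomes $c_{ij\bar a}\dot x^i\dot x^j-c_{jk\bar a}\dot x^j\dot x^k=0$, so the answer is independent of the (otherwise arbitrary) extension $\bar x(t)$, as it must be. Substituting also in the second term leaves the familiar Ma--Trudinger--Wang expression
\[
\partial_s^2\partial_t^2 c\big|_{(0,0)} = \bigl(c_{ij\bar a\bar b} - c^{k\bar m}c_{ij\bar m}c_{k\bar a\bar b}\bigr)\dot x^i(0)\,\dot x^j(0)\,\dot{\bar x}^{\bar a}(0)\,\dot{\bar x}^{\bar b}(0).
\]

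\emph{Matching with $\cross$.} With the Christoffels in hand I would compute the Riemann tensor of $h$; the calculation is formally identical to that of the curvature of a K\"ahler metric with potential $-\tfrac{1}{2}c$, and shows that, up to the standard symmetries, the only nonvanishing component is proportional to $c_{ij\bar a\bar b} - c^{k\bar m}c_{ij\bar m}c_{k\bar a\bar b}$. Evaluating \eqref{cross-curv} on the decomposable $2$-vector $(p\oplus 0)\wedge(0\oplus\bar p)$ and tracking the prefactor through both the definition of $R_h$ and the factor $-\tfrac{1}{2}$ inherited from \eqref{metric h} then yields \eqref{mixed fourth derivative}. The main obstacle is bookkeeping: that $-\tfrac{1}{2}$ propagates through the Christoffel symbols, the geodesic equation, and the Riemann tensor, and the sign/normalization conventions for $R_h$ on decomposable $2$-vectors must be tracked carefully in order to arrive at exactly the coefficient $-2$ claimed in \eqref{mixed fourth derivative}.
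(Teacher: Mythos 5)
The paper itself contains no proof of this lemma --- it is recalled from \cite{KM2} --- and the proof there proceeds by the same coordinate computation you outline: the block form of \eqref{metric h}, the vanishing of all mixed-type Christoffel symbols (which is exactly what makes $0\oplus\dot{\bar x}(0)$ parallel along $\sigma$), and the reduction of the mixed fourth derivative by the geodesic equation. Your symbols $\Gamma^k_{ij}=c^{k\bar l}c_{ij\bar l}$ and $\Gamma^{\bar k}_{\bar i\bar j}=c^{l\bar k}c_{l\bar i\bar j}$ are correct, the cancellation of the $\ddot{\bar x}$-terms via the barred analogue of \eqref{h-geodesic equation} is correct, and so is the resulting value $\bigl(c_{ij\bar a\bar b}-c^{k\bar m}c_{ij\bar m}c_{k\bar a\bar b}\bigr)\dot x^i(0)\dot x^j(0)\dot{\bar x}^{\bar a}(0)\dot{\bar x}^{\bar b}(0)$ of $\partial_s^2\partial_t^2 c$, together with the observation that only $\sigma$ need be a geodesic.

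The one genuine omission is the step that actually produces the constant in \eqref{mixed fourth derivative}: you assert the relevant curvature component is ``proportional to'' the displayed expression and defer the prefactor to bookkeeping, while that prefactor is the quantitative content of the identity. It is a short computation from the Christoffels you already have: since $\nabla_{e_i}e_{\bar b}=0$, one gets $R(e_i,e_{\bar a})e_{\bar b}=(\partial_i\Gamma^{\bar k}_{\bar a\bar b})e_{\bar k}$, and contracting with $h_{\bar kj}=-\tfrac12 c_{j\bar k}$ and using $\partial_i c^{l\bar k}=-c^{l\bar n}c_{im\bar n}c^{m\bar k}$ yields $h\bigl(R(e_i,e_{\bar a})e_{\bar b},e_j\bigr)=-\tfrac12\bigl(c_{ij\bar a\bar b}-c_{ij\bar m}c^{l\bar m}c_{l\bar a\bar b}\bigr)$; an analogous check shows the purely unbarred and purely barred components vanish, so up to symmetries this is the only one. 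Your caution about normalization is warranted: with the pairing $R_h(X\wedge Y,X\wedge Y)=h(R(X,Y)Y,X)$ this computation gives the identity with constant $-\tfrac12$, whereas the stated constant $-2$ corresponds to contracting the $(0,4)$-tensor against the antisymmetrized bivectors $(p\oplus 0)\wedge(0\oplus\bar p)$, which contributes the extra factor of $4$. So no further idea is needed --- only carrying out this last contraction and fixing the convention for $R_h$ in Definition~\ref{D:cross-curv} consistently with \cite{KM2} --- but as written your proof stops just short of verifying the claimed coefficient.
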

\noindent Note in this lemma that only one curve $\sigma(s)$ needs to be geodesic in $(N, h)$.
As a consequence of this result, the conditions {\bf A3w/s} can alternately be
characterized by the concavity / $2$-uniform concavity
for each $x \in M$ and $q_0^* \in\dom(c\text{-Exp}_{x} )$
of the function
\begin{align}\label{Alternative A3}
q^* \in T_x^* M \longrightarrow p^i p^j c_{ij} (x, \cexp_x (q_0^* + q^*)).
\end{align}
restricted to $q^*$ in the nullspace of $p \in T_x M$.
Cross-curvature nonnegativity asserts this concavity extends to all $q^*$
(not necessarily in the nullspace of $p$) such that
$q_0^* + q^* \in \dom (c\text{-Exp}_{x} )$.

Before recalling the important geometric implications of the curvature properties of $h$,
let us define:
\begin{definition}\label{D:visible set}{\bf (Illuminated set)}
Given $(x,\bar x) \in N$,  let $V(x,\bar x) \subset M$ denote those points
$y \in N(\bar x)$
for which there exists a geodesic curve from $(x,\bar x)$ to $(y,\bar x)$ in $N(\bar x) \times \{\bar x\}$.
\end{definition}

We now state a version of Loeper's maximum principle.  Although the statements
below are from \cite{KM2},  where a direct elementary proof is given
using pseudo-Riemannian geometry,
the fundamental form of the equivalence (Theorem~\ref{T:cross and DASM}) below
was deduced by Loeper \cite{L} in the simpler setting
$N = M \times \bar M \subset \R^n \times \R^n$
from results of Trudinger and Wang \cite{TW}\cite{TW2}.
We visualize his maximum principle as asserting that the
{\em double-mountain} $\max[f_0,f_1]$ stays {\em above} the {\em sliding mountain}
$f_t(y):=-c(y, \bar x(t)) + c(x, \bar x(t))$, hence refer to it by the acronym DASM.

\begin{theorem}[\bf A3w $\Leftrightarrow$ local DASM]\label{T:cross and DASM} Let $h$ be the pseudo-Riemannian metric on $N \subset M \times \bar M$ induced from the non-degenerate cost $c \in C^4(N)$ as in \eqref{metric h}. The following are equivalent.

\noindent 1. $(N, h)$ satisfies {\bf A3w}.

\noindent 2. {\bf (local DASM)} For any $h$-geodesic
$\sigma:t \in [0,1] \to (x, \bar x(t)) \in N$ and sufficiently small neighbourhood
$U \subset V_\sigma$ of $x$,  where
$V_\sigma:=\cap_{0 \le t \le 1} V(x, \bar x(t))$ is from
Definition~\ref{D:visible set},
the sliding mountain $f_t(y):=-c(y, \bar x(t)) + c(x, \bar x(t))$
satisfies the maximum principle
\begin{align*}
\hbox{
$f_t (y) \le \max [f_{0} , f_{1}] (y)$ \quad for $ 0 \le  t \le  1$.
}
\end{align*}
\end{theorem}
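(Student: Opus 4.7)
The plan is to prove the equivalence by the pseudo-Riemannian second-derivative test, the essential direction being $(1)\Rightarrow(2)$. Parameterize the $h$-geodesic as $\sigma(t)=(x,\bar x(t))$ with $\bar x(t)=\cexp_x((1-t)p_0^*+tp_1^*)$, as guaranteed by Lemma~\ref{L:c-segments are geodesics}, and set $g_y(t):=f_t(y)=-c(y,\bar x(t))+c(x,\bar x(t))$. Since $g_y$ is continuous with $g_y(0)=f_0(y)$ and $g_y(1)=f_1(y)$, the inequality $f_t(y)\le\max[f_0,f_1](y)$ on a small enough neighbourhood $U\subset V_\sigma$ of $x$ is equivalent to ruling out a strict interior maximum of $t\mapsto g_y(t)$ on $(0,1)$ for each $y\in U\setminus\{x\}$.

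Assume for contradiction that $g_y$ achieves a strict interior maximum at some $t_0\in(0,1)$. Using the visibility hypothesis $U\subset V_\sigma\subset V(x,\bar x(t_0))$, join $x$ to $y$ by an $h$-geodesic $s\mapsto(x(s),\bar x(t_0))$ inside the fiber $N(\bar x(t_0))\times\{\bar x(t_0)\}$, and set $p:=\dot x(0)$ and $\bar p:=\dot{\bar x}(t_0)$. Two expansions in $s$ drive the argument. First, $g_{x(s)}'(t_0)=-s\,c_{i\bar k}(x,\bar x(t_0))\,p^{i}\,\bar p^{\bar k}+O(s^{2})$, so the critical point condition forces the leading coefficient to vanish, which is precisely the null condition $h(p\oplus\bar p,p\oplus\bar p)=0$ selecting the vectors relevant to {\bf A3w}. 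Second, computing $g_{x(s)}''(t_0)$ and using the $h$-geodesic equation \eqref{h-geodesic equation} for $\bar x(t)$ to cancel the apparent $O(s)$ piece, together with the fiber-geodesic equation for $x(s)$ to process $\ddot x(0)$, the leading surviving term has the form $\tfrac{s^{2}}{2}\,\kappa+O(s^{3})$ where Lemma~\ref{L:mixed fourth derivative} identifies $\kappa$ as a positive multiple of $\cross_{(x,\bar x(t_0))}(p,\bar p)$. Hypothesis {\bf A3w} then forces $\kappa\ge 0$ on this null pair, contradicting $g_{x(s)}''(t_0)\le 0$ at a strict maximum once $y$ is close enough to $x$ that the $O(s^{3})$ remainder is negligible.

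For $(2)\Rightarrow(1)$, if {\bf A3w} fails at $(x,\bar x_0)$ on some null pair $(p,\bar p)$ with $\cross_{(x,\bar x_0)}(p,\bar p)<0$, take an $h$-geodesic $t\in[0,t_1]\mapsto(x,\bar x(t))$ with $\dot{\bar x}(0)=\bar p$ and a fiber geodesic $x(s)$ with $\dot x(0)=p$. The Taylor expansion of $g_{x(s)}(t)$, organized with the help of Lemma~\ref{L:mixed fourth derivative} and the nullness of $(p,\bar p)$ to kill the lower-order $s$-terms in $t$, has a leading bump of the wrong sign proportional to $s^{2}\,t(t_1-t)\,\cross(p,\bar p)$ for small $s$ and short $t_1$, producing a strict interior maximum of $g_{x(s)}$ and thereby violating DASM.

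The main obstacle is the bookkeeping in the forward direction: one must verify that the leading $s$-coefficient of $g_y'(t_0)=0$ reproduces exactly the null condition of {\bf A3w}; that the cancellations afforded by the two geodesic equations reduce $g_{x(s)}''(t_0)$ to an expression of order $s^{2}$; and that this $s^{2}$-coefficient is a positive multiple of the cross-curvature appearing in Lemma~\ref{L:mixed fourth derivative}. The pseudo-Riemannian identifications in \eqref{metric h} and Remark~\ref{D:differential cexp}, together with the alternative concavity characterization \eqref{Alternative A3} of {\bf A3w}, are the tools that reconcile the analytic and geometric viewpoints, while the visibility hypothesis $U\subset V_\sigma$ is what keeps the fiber geodesic used at the second step well-defined at every candidate critical time $t_0\in(0,1)$.
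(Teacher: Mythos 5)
Your direction (2)$\Rightarrow$(1) is the standard necessity argument and looks repairable, but the forward direction (1)$\Rightarrow$(2) --- the heart of the theorem --- has a genuine gap, in fact two. First, the critical-point condition $g_y'(t_0)=0$ is imposed at $y=x(s_y)$, not at $x$: from your expansion $g_{x(s)}'(t_0)=-s\,c_{i\bar k}p^i\bar p^{\bar k}+O(s^2)$ you can only conclude $c_{i\bar k}p^i\bar p^{\bar k}=O(s_y)$, i.e.\ \emph{approximate} nullity of $(p,\bar p)$ at $(x,\bar x(t_0))$, whereas {\bf A3w} is an inequality on \emph{exactly} null pairs and gives no quantitative control nearby without further argument. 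Second, and decisively, the contradiction cannot close: at a maximum you only know $g_y''(t_0)\le 0$, while your expansion gives $g_y''(t_0)=\tfrac{s_y^2}{2}\kappa+O(s_y^3)$ with $\kappa$ proportional to $\cross_{(x,\bar x(t_0))}(p,\bar p)$, which {\bf A3w} only forces to be $\ge 0$ --- and it may well vanish (for the Riemannian cost some cross-curvatures always do, cf.\ Lemma~\ref{L:zero cross-curvature}). With $\kappa=0$ there is no leading term against which the $O(s_y^3)$ remainder could be ``negligible,'' and $g_y''(t_0)\le 0$ is never contradicted. Note also that even the exact statement ``$\partial_t^2 f_t(y)\ge 0$ at every interior critical $t_0$'' would not yield the maximum principle: $t\mapsto -(t-\tfrac12)^4$ satisfies it and still has an interior maximum above its endpoint values. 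So a pointwise second-derivative test at $t_0$, run to leading order in $s$, cannot prove local DASM under {\bf A3w}; it could only work under the strict condition {\bf A3s}, where $\kappa>0$ gives a real sign, followed by some limiting procedure.

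This is precisely why the paper does not argue this way: Theorem~\ref{T:cross and DASM} is quoted from \cite{KM2}, where the elementary proof is an \emph{exact}, non-perturbative argument along the entire fiber geodesic, and the theorem the paper does prove in detail, Theorem~\ref{T:cross and local time-convex}, shows the template: set $g(s):=\partial_t^2\big|_{t_0}f_t(x(s))$, obtain $g(0)=0$ and $g'(0)=0$ exactly from the $h$-geodesic equation \eqref{h-geodesic equation}, and control $g''(s)$ for \emph{all} $s\in[0,1]$ via Lemma~\ref{L:mixed fourth derivative}. Under nonnegative cross-curvature this closes immediately; under mere {\bf A3w} one controls $g''(s)$ only at those $s$ where the null condition $h(\dot x(s)\oplus\dot{\bar x}(t_0),\dot x(s)\oplus\dot{\bar x}(t_0))=0$, equivalently $\partial_s\partial_t f_t(x(s))|_{t=t_0}=0$, holds, and propagating that constraint along the whole fiber geodesic is the missing idea your sketch does not supply (it is the extra work carried out in \cite{KM2}). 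To salvage your outline, either import that global argument, or first prove the statement under {\bf A3s} by your perturbative scheme and then recover {\bf A3w} by approximation; some such device is indispensable, and the leading-order Taylor bookkeeping you describe is not enough.
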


Then one can prove as in \cite{KM2} the following. \begin{theorem}[\bf A3w + $c$-convexity of domains $\Rightarrow$ DASM]\label{T:A3w and DASM}
Let $h$ be the pseudo-Riemannian metric on $N \subset M \times \bar M$ induced by the
non-degenerate cost $c \in C^4(N) \cap C(M \times \bar M)$ as in \eqref{metric h}.
 Suppose $(N, h)$ is {\bf A3w}, and the set $V_\sigma :=\cap_{0 \le t \le 1} V(x, \bar x(t))$
from Definition~\ref{D:visible set} is dense in $M$ for some $h$-geodesic
$\sigma: t \in [0,1] \longrightarrow (x, \bar x(t)) \in N$.
For any $y$ in $M$, 
the sliding mountain $f_t(y):=-c(y, \bar x(t)) + c(x, \bar x(t))$ satisfies the
maximum principle
\begin{align}\label{DASM}
\hbox{{\bf DASM:} \ \ \ \ \ \
$f_t (y) \le \max [f_{0} , f_{1}] (y)$ \quad for $0 \le t \le 1$.
}
\end{align}
\end{theorem}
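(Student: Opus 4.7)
The plan is to bootstrap the local maximum principle of Theorem~\ref{T:cross and DASM} up to a global one, using the density of $V_\sigma$ together with continuity of $c$ on $M\times\bar M$.

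First, a reduction by density: since $c$ is continuous on all of $M \times \bar M$, both $f_t(y)$ and $\max[f_0,f_1](y)$ depend continuously on $y \in M$, and so the DASM inequality for arbitrary $y \in M$ will follow once it is established for $y$ in the dense subset $V_\sigma$. Given such $y \in V_\sigma$ and each $t \in [0,1]$, the definition of $V(x,\bar x(t))$ furnishes an $h$-geodesic
\[
s \in [0,1] \longmapsto (y_t(s),\bar x(t))
\]
inside the totally geodesic leaf $N(\bar x(t)) \times \{\bar x(t)\}$, joining $(x,\bar x(t))$ at $s=0$ to $(y,\bar x(t))$ at $s=1$. These ``horizontal'' geodesics are the paths along which I intend to carry the inequality from $x$ out to $y$.

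The main step is a first-violation / connectedness argument. Set
\[
A := \bigl\{ z \in V_\sigma \ \big| \ f_t(z) \le \max[f_0(z),f_1(z)] \ \text{ for all } t \in [0,1] \bigr\};
\]
by Theorem~\ref{T:cross and DASM}, $A$ contains a relative neighborhood of $x$ in $V_\sigma$, and by continuity of $c$ the set $A$ is relatively closed in $V_\sigma$. Suppose for contradiction that some $y \in V_\sigma$ lies outside $A$, with $f_{t^*}(y) > \max[f_0(y),f_1(y)]$ at some $t^* \in (0,1)$. Walking along $s \mapsto y_{t^*}(s)$ from $x$ toward $y$, let $s_*$ be the infimum of $s \in (0,1]$ at which the corresponding inequality first fails. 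At the critical point $z_* := y_{t^*}(s_*)$ the DASM inequality holds with equality at some $t$, and I would apply Theorem~\ref{T:cross and DASM} at the new base point $(z_*,\bar x(t^*))$ along an $h$-geodesic joining $(z_*,\bar x(0))$ to $(z_*,\bar x(1))$, then correct for the change of base point and propagate past $s_*$, contradicting the choice of $s_*$.

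The hard part is the bookkeeping in this last step: applying local DASM at a new base point $z_*$ yields an inequality with the ``wrong'' additive constants (a sliding mountain centered on $z_*$ rather than $x$), and the envelope $\max[f_0,f_1]$ is non-smooth along the ridge $\{f_0=f_1\}$. I expect to handle the constants by noting that the two sliding mountains differ, along the geodesic $\bar x(t)$, by an affine function of $t$, which therefore attains its maximum on $[0,1]$ at the endpoints; and to handle the ridge by splitting into cases according to which of $f_0(z_*)$ or $f_1(z_*)$ realizes the maximum, each of which reduces to the smooth setting already controlled by Theorem~\ref{T:cross and DASM}. Throughout, the density of $V_\sigma$ keeps the intermediate base points in the regime where Theorem~\ref{T:cross and DASM} directly applies.
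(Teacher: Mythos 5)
Your reduction to $y \in V_\sigma$ by density of $V_\sigma$ and continuity of $c$ on $M\times\bar M$ is exactly right (it is how the paper extends the analogous inequality in Corollary~\ref{C:cross and global time-convex}). The gap is in the propagation step. Theorem~\ref{T:cross and DASM} applied at a new base point $z_*$ only controls the sliding mountain generated by an $h$-geodesic \emph{based at $z_*$}, i.e.\ a curve $t \mapsto (z_*,\bar y(t))$ along which $-Dc(z_*,\bar y(t))$ is affine. But $t \mapsto (z_*,\bar x(t))$ is in general \emph{not} an $h$-geodesic (the curve $\bar x(\cdot)$ is a $c$-segment with respect to $x$, not with respect to $z_*$), so the local maximum principle at $z_*$ says nothing about $-c(\cdot,\bar x(t)) + c(z_*,\bar x(t))$; and if you instead use the genuine $h$-geodesic joining $(z_*,\bar x(0))$ to $(z_*,\bar x(1))$, its intermediate targets $\bar y(t)$ differ from $\bar x(t)$, so the inequality you obtain concerns the wrong family of functions. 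Your proposed repair --- that the $x$-centered and $z_*$-centered sliding mountains differ by an affine function of $t$ --- is false for general costs: the difference is $t \mapsto c(x,\bar x(t)) - c(z_*,\bar x(t))$, whose second $t$-derivative is precisely the kind of quantity the whole theorem is trying to control (it is affine for the quadratic cost on $\R^n$, where $c$-segments are straight lines, but that is special). There are further problems with the first-violation scheme: the intermediate points $y_{t^*}(s)$ need not lie in $V_\sigma$ (density does not place the points of your particular geodesic there), so the geodesics needed to invoke Theorem~\ref{T:cross and DASM} at $z_*$ may not exist; and at $z_*$ you only know equality in a non-strict inequality, which by itself gives no mechanism to push past $s_*$.

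The argument the paper relies on (from \cite{KM2}, parallel to the proof of Theorem~\ref{T:cross and local time-convex} given in the text) exploits affinity in the \emph{spatial} parameter $s$, not in $t$. Fix $y \in V_\sigma$ and let $t_0$ be an interior critical point of $t \mapsto f_t(y)$; take the $h$-geodesic $s \in [0,1] \mapsto (x(s),\bar x(t_0))$ from $x$ to $y$ in the leaf $N(\bar x(t_0)) \times \{\bar x(t_0)\}$, along which $s \mapsto -\bar D c(x(s),\bar x(t_0))$ is affine. Hence $\phi(s) := \frac{\partial}{\partial t}\big|_{t=t_0} f_t(x(s))$ is affine in $s$, vanishes at $s=0$ (trivially) and at $s=1$ (criticality at $y$), so $\phi \equiv 0$; since $\phi'(s) = h(\dot x(s)\oplus \dot{\bar x}(t_0),\, \dot x(s)\oplus\dot{\bar x}(t_0))$, this is exactly the nullity needed to apply {\bf A3w} at every $s$. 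Then, as in the proof of Theorem~\ref{T:cross and local time-convex}, the function $g(s) := \frac{\partial^2}{\partial t^2}\big|_{t=t_0} f_t(x(s))$ satisfies $g(0)=0$, $g'(0)=0$ by the geodesic equation \eqref{h-geodesic equation}, and $g''\ge 0$ by Lemma~\ref{L:mixed fourth derivative} together with {\bf A3w}, so $\frac{\partial^2}{\partial t^2} f_t(y)\big|_{t=t_0} = g(1) \ge 0$. Thus interior critical points cannot produce a maximum exceeding the endpoint values (degenerate critical points are handled by a routine perturbation), giving \eqref{DASM} for $y \in V_\sigma$; your density/continuity step then completes the proof.
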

\noindent
\begin{remark}[\bf Relating density of the illuminated set to $c$-convexity of the domain]
In case $N= M \times \bar M$,  the density hypothesis of the preceding theorem holds
whenever $\bar D c(M,\bar x) \subset T^*_{\bar x}\bar M$ is convex
for each $\bar x \in \bar M$, since then $V_\sigma = M$.
This is the case considered initially by Trudinger and Wang \cite{TW} and Loeper \cite{L}.
For our argument, it is enough that $\bar D c(M,\bar x(t))$ be star-shaped around
$\bar Dc(x,\bar x(t))$ for each $t \in [0,1]$.
Conversely,  $V(x,\bar x)=M$ for each $x \in M$ implies convexity of $\bar Dc(M,\bar x)$.
\end{remark}

Our pseudo-Riemannian method \cite{KM2} makes it equally possible to deduce further
geometric implications of the cross-curvature condition, as the next theorem
and corollary show.

\begin{theorem}[\bf nonnegative cross-curvature $\Leftrightarrow$ local time-convex sliding mountain]\label{T:cross and local time-convex} Let $h$ be the pseudo-Riemannian metric on $N \subset M \times \bar M$ induced from the non-degenerate cost $c \in C^4(N)$ as in \eqref{metric h}. The following are equivalent.

\noindent 1. $(N, h)$ is non-negatively cross-curved.

\noindent
2. For each $h$-geodesic $\sigma:t \in [0,1] \longrightarrow (x, \bar x(t)) \in N$
and sufficiently small neighborhood $U \subset V_\sigma$ of $x \in U$, where
$V_\sigma:=\cap_{0 \le t \le 1} V(x, \bar x(t))$
is from Definition~\ref{D:visible set}, 
the sliding mountain $f_t(y):=-c(y, \bar x(t)) + c(x, \bar x(t))$ is a convex function
of $t \in [0,1]$ for each $y \in U$, i.e.,
$\frac{\partial^2}{\partial t^2} f_t (y) \ge 0$ for $0 \le t \le 1$.
\end{theorem}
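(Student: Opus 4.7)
The plan is to parallel the pseudo-Riemannian template that establishes Theorem~\ref{T:cross and DASM} ($\mathbf{A3w} \Leftrightarrow$ local DASM), upgrading the conclusion from quasi-convexity (DASM) to honest convexity in $t$; this upgrade mirrors the passage from the $p$-null concavity characterization \eqref{Alternative A3} of $\mathbf{A3w}$ to unrestricted concavity, which is the defining property of non-negative cross-curvature. Abbreviate
\begin{equation*}
\phi(y,t) \ := \ \partial_t^2 f_t(y) \ = \ -\partial_t^2 c\bigl(y,\bar x(t)\bigr) + \partial_t^2 c\bigl(x,\bar x(t)\bigr),
\end{equation*}
so the goal is $\phi \ge 0$ on $U \times [0,1]$. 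Differentiating the $c$-segment identity \eqref{c-segment} at $y = x$ shows $c_i(x,\bar x(t)) = -r^*_i(t)$ is affine in $t$, which immediately forces $\phi(x,t) \equiv 0$ and $D_y\phi(x,t) \equiv 0$ along $\sigma$. Hence $y = x$ is a critical point of $\phi(\,\cdot\,,t)$ with critical value zero for every $t$.

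For the direction $(2) \Rightarrow (1)$, I would fix an arbitrary $(x_0,\bar x_0) \in N$ and vectors $p \in T_{x_0}M$, $\bar p \in T_{\bar x_0}\bar M$. Construct $h$-geodesics $t \mapsto (x_0,\bar x(t))$ and $s \mapsto (y(s),\bar x_0)$ through $(x_0,\bar x_0)$ with $\dot{\bar x}(0) = \bar p$ and $\dot y(0) = p$ (rescaling $p$ and $\bar p$ if needed so both are parametrized on $[0,1]$; this only rescales the final cross-curvature by a positive factor). Hypothesis (2) gives $\phi(y(s),t) \ge 0$ for $s \ge 0$ small; Taylor-expanding $s \mapsto \phi(y(s),0)$ and using $\phi(x_0,0) = 0$ together with $D_y\phi(x_0,0) = 0$ forces the second variation $\tfrac{d^2}{ds^2}\phi(y(s),0)\bigr|_{s=0} \ge 0$. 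Lemma~\ref{L:mixed fourth derivative} applied to the surface $(s,t) \mapsto (y(s),\bar x(t))$ --- for which only $\sigma(s) = (y(s),\bar x_0)$ need be an $h$-geodesic --- identifies this second variation with $\tfrac12\cross_{(x_0,\bar x_0)}(p,\bar p)$, giving non-negative cross-curvature.

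For the direction $(1) \Rightarrow (2)$, fix $y \in U \subset V_\sigma$ and $t_0 \in [0,1]$. Because $y \in V(x,\bar x(t_0))$, there is a $c$-segment $s \in [0,1] \mapsto y(s) \in N(\bar x(t_0))$ joining $y(0) = x$ to $y(1) = y$; crucially, $s \mapsto (y(s),\bar x(t_0))$ remains an $h$-geodesic when re-centered at \emph{any} $s_0 \in [0,1]$. Applying Lemma~\ref{L:mixed fourth derivative} at each basepoint $(y(s_0),\bar x(t_0))$ (after re-centering $t$ at $t_0$) delivers
\begin{equation*}
\tfrac{d^2}{ds^2}\phi(y(s),t_0) \ = \ -\partial_s^2\partial_t^2 c\bigl(y(s),\bar x(t)\bigr)\Big|_{t=t_0} \ = \ \tfrac12\cross_{(y(s),\bar x(t_0))}\bigl(\dot y(s),\dot{\bar x}(t_0)\bigr) \ \ge \ 0
\end{equation*}
pointwise along the entire segment by hypothesis (1). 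Thus $s \mapsto \phi(y(s),t_0)$ is convex on $[0,1]$, with value zero and vanishing slope at $s = 0$; convexity then forces $\phi(y,t_0) = \phi(y(1),t_0) \ge 0$.

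The conceptual crux --- and the reason for running the argument along $c$-segments rather than generic variations --- is that positive semi-definite Hessian at a critical point does not by itself give $\phi \ge 0$ nearby; pointwise non-negativity of the second variation \emph{along the entire segment} $y(s)$ is what upgrades infinitesimal information to a full neighborhood estimate. The price is that the auxiliary $c$-segment $y(s)$ depends on $t_0$, whence the need for the hypothesis $U \subset V_\sigma = \bigcap_{t\in[0,1]} V(x,\bar x(t))$ so that such a segment exists for every $t_0$ simultaneously; this domain restriction is the main technical obstacle, and once it is imposed the argument is a short parallel of Theorem~\ref{T:cross and DASM}.
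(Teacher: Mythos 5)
Your argument is correct and is essentially the paper's own proof: the same auxiliary quantity $\partial_t^2 f_t$ evaluated along an $h$-geodesic $s\mapsto (y(s),\bar x(t_0))$ in the $M$-factor, the same identification of its second $s$-derivative with $\tfrac12\cross$ via Lemma~\ref{L:mixed fourth derivative}, and the same vanishing of value and slope at $s=0$ (your affine-in-$t$ observation is exactly the paper's computation \eqref{vanishing due to geodesy} from the geodesic equation \eqref{h-geodesic equation}), with only the cosmetic difference that in $(2)\Rightarrow(1)$ you take the $s$-curve to be a geodesic while the paper takes an arbitrary $s$-curve and uses the $t$-geodesic form of the lemma. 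The one item the paper supplies that you take for granted is the elementary-but-tedious check that $V_\sigma$ really contains a neighbourhood $U$ of $x$ (compactness of $\sigma$ plus $C^2$-dependence of $\bar D c(\cdot,\bar x(t))$ gives convexity of $\bar D c(B_r(x),\bar x(t))$ for small $r$), which is what makes hypothesis (2) non-vacuous and is implicitly invoked when you assert $\phi(y(s),t)\ge 0$ for small $s$.
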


\begin{proof}[Proof: (1 implies 2)] Fix an arbitrary $h$-geodesic
$t:[0,1] \to (x, \bar x(t)) \in N $.
Let $U$ be chosen open so that
$x \in U \subset V_\sigma$.
The existence of such a $U$ is elementary, tedious, and independent of hypothesis 1:
it requires checking --- at least for $\delta$
sufficiently small depending on $T \in [0,1]$ ---
that $\cap_{T-\delta \le t \le T+\delta} V(x,\bar x(t))$ contains a neighbourhood of $x$,
as we now do.  Choose coordinates on $M$ near $x$.
Since the geodesic $\sigma$ is compact in the open set $N$, some coordinate ball
satisfies $B_r(x) \times \{\bar x(t)\} \subset N$ for all $t \in [0,1]$.
Since the coordinate charts
$x \in M \longrightarrow \bar D c(x, \bar x(t)) \in T^*_{\bar x(t)} \bar M$
are $C^2$ smooth functions of $(x,t) \in M \times [0,1]$, taking $r=r(T)$ and $\delta(T)>0$
sufficiently small ensure $\bar D c(B_r(x),\bar x(t))$ is convex for each $t$ within
$\delta(T)$ of $T$.
This convexity implies $B_r(x) \subset V(x,\bar x(t))$.
Extracting a finite subcover $\cup_{i=1}^N (T_i - \delta(T_i),T_i+\delta(T_i))$ of $[0,1]$
and taking $U = \cap_{i \le N} B_{r(T_i)}(x)$ will suffice.

Now define
$f_t(\cdot ):= -c (\cdot, \bar x(t))+ c(x, \bar x(t))$.  Fix arbitrary $t_0 \in [0,1]$ and
$y \in U$. The fact $U \subset V_\sigma$ guarantees an $h$-geodesic
$s: [0,1] \to (x (s), \bar x(t_0)) \in N$ with $x (0)=x$ and $x (1)=y$.
Define an auxiliary function $g(s):= \frac{\partial^2}{\partial t ^2}\big{|}_{t=t_0} f_t (x(s))$, which shall be shown to be non-negative for $s \in [0,1]$.
By Lemma~\ref{L:mixed fourth derivative} and the cross-curvature non-negativity,
\begin{align}\label{f nonnegative mixed fourth derivative}
\frac{d^2 g}{d s^2}  \ge 0, \hbox{  for $s \in [0,1]$}.
\end{align}
In particular, $g(s)$ is convex.
It is clear that $g(0)=\frac{\partial^2}{\partial t^2}\big{|}_{t=t_0} f_t (x)=0$.
We also claim $g'(0) =0$: introducing
coordinates $x^1,\ldots,x^n$ around $x=x(0)$ on $M$ and
$x^{\bar 1},\ldots,x^{\bar n}$ around $\bar x(t_0)$ on $\bar M$, we compute
\begin{eqnarray}\label{vanishing due to geodesy}
 \frac{d g}{d s }\bigg|_{s=0}
&=& - (c_{{\bar i} k}(x(0),\bar x(t_0)) \ddot x^{\bar i} + c_{{\bar i} {\bar j} k}(x(0),\bar x(t_0)) \dot x^{\bar i} \dot x^{\bar j}) \dot x^k\\\nonumber
&=& 0
\end{eqnarray}
by the $h$-geodesic equation \eqref{h-geodesic equation} for $t \in [0,1] \to (x, \bar x (t)) \in N$.
From \eqref{f nonnegative mixed fourth derivative}, this shows $g(s) \ge 0$, for $0 \le s \le 1$, in particular at $s=1$,
\begin{align*}
g(1)= \frac{\partial^2}{\partial t^2}f_t (y)\Big{|}_{t=t_0} \ge 0.
\end{align*}
Since we have used $U \subset V_\sigma$ but not the openness or smallness of $U$,
we have actually deduced convexity of $t \in [0,1] \longrightarrow f_t(y)$ for all
$y \in V_\sigma$.  A fortiori, 1 $\Longrightarrow$ 2.
\qec

\noindent
\emph{Proof: (2 implies 1).} Fix $(x, \bar x) \in N$, $p \oplus \bar p \in T_{(x, \bar x)} N$.
It shall be shown that $\hbox{sec}_{(x, \bar x)} ((p\oplus 0) \wedge (0 \oplus \bar p) ) \ge 0$.
Choose an $h$-geodesic $t \in [-1,1] \to (x, \bar x(t)) \in N$, with $\bar x(0)= \bar x$,
$\dot {\bar x}(0)= \bar p$, and a curve $ s \in [-1, 1] \to (y(s), \bar x) \in N$, with
$y(0)=x$, $\dot y (0)= p$.
Let $f(\cdot):= -c (\cdot, \bar x (t)) + c(x, \bar x(t))$.
Suppose
\begin{align}\label{g nonnegative}
g(s):=\frac{\partial^2}{\partial t ^2}\Big{|}_{t=0} f_t (y (s)) \ge 0,
\end{align} for $s \in [-1, 1]$: this holds if  the property 2 is assumed and the curve
$y(s)$ is chosen inside the neighborhood $U$ of $x$ constructed at the outset.
Note $g(0) = \frac{\partial ^2}{\partial t ^2}\big{|}_{t=0} f_t (x) =0$.
Thus, from \eqref{g nonnegative}, $g'( 0) =0$ and
\begin{align*}
0 & \le \frac{d ^2}{d s ^2} \Big{|}_{s=0} g (s) \\
& = \frac{1}{2}\hbox{cross}_{(x, \bar x)} (\dot y (0), \dot {\bar x} (0)).
\end{align*}
The last equality comes from Lemma~\ref{L:mixed fourth derivative}.
This completes the proof 2 $\Longrightarrow$ 1.
\end{proof}


\begin{corollary}[\bf non-negative cross-curvature $+$ $c$-convexity of domains $\Longrightarrow$ time-convex sliding mountain]\label{C:cross and global time-convex}
Suppose the pseudo-Riemannian metric $h$ induced by the
non-degenerate cost $c \in C^4(N) \cap C(M \times \bar M)$ on $N \subset M \times \bar M$
in \eqref{metric h} is non-negatively cross-curved.
If for some $h$-geodesic
$\sigma:t \in [0,1] \longrightarrow (x, \bar x(t)) \in N$ the set
$V_\sigma :=\cap_{0 \le t \le 1} V(x, \bar x(t))$ from Definition~\ref{D:visible set}
is dense in $M$,  then
for each $y \in M$, the sliding mountain $f_t(y):=-c(y, \bar x(t)) + c(x, \bar x(t))$
satisfies 
 \begin{align}\label{global time-convex}
 f_t (y) \le (1-t) f_0(y) + t f_1(y) \quad {\rm for}\ 0 \le t \le 1.
 \end{align}
\end{corollary}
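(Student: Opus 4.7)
The plan is to reduce this global statement to the pointwise convexity already essentially proved in Theorem~\ref{T:cross and local time-convex}, then to promote the result from the dense set $V_\sigma$ to all of $M$ by continuity.

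First I would observe that the proof of Theorem~\ref{T:cross and local time-convex}, as the authors already remark, actually establishes convexity of $t \in [0,1] \longrightarrow f_t(y)$ for every $y$ in $V_\sigma$ itself, not merely for $y$ in a small neighborhood of $x$. Indeed, for any fixed $y \in V_\sigma$ and any $t_0 \in [0,1]$, the definition of $V_\sigma$ provides an $h$-geodesic $s \in [0,1] \longrightarrow (x(s), \bar x(t_0)) \in N$ with $x(0)=x$ and $x(1)=y$. Setting $g(s) := \frac{\partial^2}{\partial t^2}\big|_{t=t_0} f_t(x(s))$, Lemma~\ref{L:mixed fourth derivative} together with non-negativity of cross-curvature yields $g''(s) \ge 0$ on $[0,1]$, while $g(0)=0$ and the $h$-geodesic equation \eqref{h-geodesic equation} forces $g'(0)=0$ as in \eqref{vanishing due to geodesy}. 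Hence $g(1) = \frac{\partial^2}{\partial t^2}\big|_{t=t_0} f_t(y) \ge 0$. Since $t_0 \in [0,1]$ was arbitrary, the one-variable function $t \in [0,1] \longmapsto f_t(y)$ is convex, so $f_t(y) \le (1-t)f_0(y) + t f_1(y)$ on $[0,1]$.

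Next I would extend the inequality from $y \in V_\sigma$ to arbitrary $y \in M$. The hypothesis $c \in C^4(N) \cap C(M \times \bar M)$ guarantees that for each fixed $t \in [0,1]$ the function $y \longmapsto f_t(y) = -c(y,\bar x(t)) + c(x,\bar x(t))$ is continuous on all of $M$. Density of $V_\sigma$ in $M$ thus lets me approximate any $y \in M$ by a sequence $y_k \in V_\sigma$ and pass to the limit in the inequality $f_t(y_k) \le (1-t) f_0(y_k) + t f_1(y_k)$, yielding \eqref{global time-convex} at $y$.

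The argument has essentially no hard step: the analytic content is fully contained in Lemma~\ref{L:mixed fourth derivative} and the geodesic computation already carried out in the proof of Theorem~\ref{T:cross and local time-convex}. The only point that requires care is making sure one does not need openness of the set on which convexity was derived in that theorem — this is why I would first re-extract, rather than cite, the pointwise convexity on $V_\sigma$, and only then invoke continuity of $c$ on $M \times \bar M$ to handle points $y \notin V_\sigma$.
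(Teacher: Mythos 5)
Your proposal is correct and follows essentially the same route as the paper: the authors likewise note that the argument for $1\Rightarrow 2$ in Theorem~\ref{T:cross and local time-convex} already yields convexity of $t\mapsto f_t(y)$ for every $y\in V_\sigma$ (openness of $U$ being unused), and then extend \eqref{global time-convex} to all of $M$ by density of $V_\sigma$ and continuity of $c$ on $M\times\bar M$. Your re-extraction of the pointwise convexity on $V_\sigma$ is just a self-contained restatement of that same step.
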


\begin{proof}
For $y \in V_\sigma$, inequality \eqref{global time-convex} was established
while deriving implication 1 $\Longrightarrow$ 2 of
Theorem~\ref{T:cross and local time-convex}.
The inequality extends to $y \in M$ by the density
of $V_\sigma$ and the continuity of $c$ on $M \times \bar M$.
\end{proof}


\begin{remark}
The density condition of Theorem~\ref{T:A3w and DASM} and Corollary~\ref{C:cross and global time-convex} can be further weakened by the works of Figalli, Loeper and Villani \cite{FV}\cite{LV} (see \cite[Theorem 12.36]{V2}).
\end{remark}

\section{Tensor products of pseudo-Riemannian metrics}\label{S:product}
This section contains the proofs of assertions 3) and 4) in Theorem~\ref{T:main}.
Assertion 3) is actually a corollary of a more general theorem,  which does not
require the transportation cost $c$ or manifold $M$ to be Riemannian:

\begin{theorem}{\bf (Products preserve non-negative cross-curvature)}\label{T:nonRiemannian products}
Let $c_\pm \in C^4(N_\pm)$ be non-degenerate non-negatively cross-curved costs
on two manifolds $N_\pm \subset M_\pm \times \bar M_\pm$.  Then
$c(x_+,x_-,\bar x_+,\bar x_-) =c_+(x_+,\bar x_+) + c_-(x_-,\bar x_-)$
is non-degenerate and non-negatively cross-curved on
$(x_+,\bar x_+,x_-, \bar x_-) \in  N_+ \times N_-$, but never {\bf A3s}.
\end{theorem}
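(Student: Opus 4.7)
The plan is to exploit the observation that, for an additive cost $c(x_+,x_-,\bar x_+,\bar x_-) = c_+(x_+,\bar x_+)+c_-(x_-,\bar x_-)$, every object relevant to cross-curvature splits block-diagonally along the two factors, so the conclusion reduces to the hypothesized non-negativity in each factor. First I would compute the mixed Hessians $D\bar D c$ and $\bar D D c$: because the factors are uncoupled, all cross-terms such as $\partial^2 c/\partial x_+ \partial \bar x_-$ vanish, so these Hessians are block-diagonal with diagonal blocks $D_\pm \bar D_\pm c_\pm$. The induced pseudo-metric \eqref{metric h} therefore decomposes as a direct sum $h = h_+ \oplus h_-$ under the identification $T(N_+\times N_-) \cong TN_+ \oplus TN_-$, so non-degeneracy of $h$ is inherited from {\bf A2} on each factor, and the cost-exponential of $c$ factors as the product of the factorwise $c_\pm$-exponentials.

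Next I would verify that $h$-geodesics split: the $c$-segment equation \eqref{h-geodesic equation} is block-diagonal along the factor splitting, so with $(x_+,x_-)$ held fixed, the curve $s \mapsto (\bar x_+(s), \bar x_-(s))$ is an $h$-geodesic if and only if each projection is an $h_\pm$-geodesic in $N_\pm$.

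The heart of the argument then invokes Lemma~\ref{L:mixed fourth derivative}. Given tangent vectors $p = p_+\oplus p_-$ and $\bar p = \bar p_+ \oplus \bar p_-$ at $(x,\bar x) \in N_+ \times N_-$, I pick an $h$-geodesic $\sigma(s) = (x(s),\bar x)$ with $\dot x(0) = p$, realized as the product of two factorwise $h_\pm$-geodesics, together with any curve $\tau(t) = (x, \bar x(t))$ with $\dot{\bar x}(0) = \bar p$. By additivity of $c$, the mixed fourth derivative splits, giving
\begin{align*}
\cross_{(x,\bar x)}(p,\bar p)
&= -2 \frac{\partial^4}{\partial s^2 \partial t^2}\bigg|_{s=0=t} c(x(s),\bar x(t)) \\
&= \cross^{+}_{(x_+,\bar x_+)}(p_+,\bar p_+) + \cross^{-}_{(x_-,\bar x_-)}(p_-,\bar p_-),
\end{align*}
and both summands are non-negative by hypothesis.

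Finally, to rule out {\bf A3s}, choose $p = p_+ \oplus 0$ with $p_+ \ne 0$ and $\bar p = 0 \oplus \bar p_-$ with $\bar p_- \ne 0$. The block-diagonal structure of $h$ immediately gives $h(p\oplus\bar p, p\oplus\bar p) = 0$, while the splitting above forces $\cross(p,\bar p) = \cross^{+}(p_+,0) + \cross^{-}(0,\bar p_-) = 0$, since cross-curvature vanishes whenever either of its arguments does. This violates the strict positivity that {\bf A3s} would require. The only mildly delicate point in the whole plan is certifying the existence of an $h$-geodesic through the product with prescribed initial velocity, but this is immediate from the block-diagonal form of \eqref{h-geodesic equation} together with the existence of the factorwise $c_\pm$-exponentials guaranteed by {\bf A2}.
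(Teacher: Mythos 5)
Your proposal is correct and follows essentially the same route as the paper: the block-diagonal splitting $h = h_+ \oplus h_-$, the fact that products of factorwise geodesics are $h$-geodesics, the additive cross-curvature identity via Lemma~\ref{L:mixed fourth derivative}, and the same choice $p = p_+ \oplus 0$, $\bar p = 0 \oplus \bar p_-$ to defeat {\bf A3s}. The only difference is cosmetic: you spell out the $h$-nullity of that pair and the geodesic-splitting check slightly more explicitly than the paper does.
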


\begin{proof}
If the cost functions $c_\pm: N_\pm \subset M_\pm \times \bar M_\pm \to \R$
define non-degenerate metrics $h_\pm$, then so does the cost
$c(x_+, x_-, \bar x_+, \bar x_-) = c_+ (x_+, \bar x_+) + c_-(x_-, \bar x_-)$ on
$N = N_+ \times N_-$, since the corresponding metric $h$ separates into block
diagonal components $h_\pm$ with non-vanishing determinants.
In this sense the geometry $(N,h)$ is the pseudo-Riemannian
tensor product of the geometries $(N_\pm, h_\pm)$.
It follows that the product $(\gamma_+(s),\gamma_-(s))$ of
geodesics $s \in [-1,1] \longrightarrow \gamma_\pm (s)$ in $N_\pm$
is a geodesic in $N= N_+ \times N_-$. Lemma \ref{L:mixed fourth derivative}
then implies
\begin{align}\label{sum of cross}
\mathop{\rm cross}(p_+ \oplus p_-, \bar p_+ \oplus \bar p_-)
= \cross_+ (p_+, \bar p_+) + \cross_-(p_-, \bar p_-)
\end{align}
for all $(x_+, \bar x_+, x_-, \bar x_-) \in N$ and
$(p_+ \oplus p_-) \oplus (\bar p_+ \oplus \bar p_-) \in T_{(x_+, \bar x_+, x_-, \bar x_-)} N$.
The proof is completed by observing that non-negativity of both summands guarantees
the same for their sum. Choosing $p_-=\bar p_+ =0$ in identity \eqref{sum of cross}
demonstrates that the tensor product cost $c$ cannot satisfy {\bf A3s}.
\end{proof}

\begin{proof}[\bf Proof of Theorem \ref{T:main}(3)]
Given two Riemannian manifolds $M_\pm = \bar M_\pm$, denote their geodesic distances
squared by $c_\pm(x_\pm,\bar x_\pm) = \frac{1}{2}\dist^2_\pm(x_\pm,\bar x_\pm)$ respectively,
and let $c(x_+,x_-,\bar x_+,\bar x_-) = \frac{1}{2}\dist^2((x_+,x_-),(\bar x_+,\bar x_-))$
denote the geodesic distance squared on $M_+ \times M_-$ equipped with the
Riemannian product metric.
Then $c(x_+, x_-, \bar x_+, \bar x_-) = c_+ (x_+, \bar x_+) + c_-(x_-, \bar x_-)$,
and the non-negative cross-curvature of $c$ on $N=N_+ \times N_-$ follows
from Theorem \ref{T:nonRiemannian products},
taking $N_\pm$ to be the domains where $c_\pm$ are smooth.
This concludes the proof of Theorem \ref{T:main}(3).
\end{proof}

The assertion 4 in Theorem~\ref{T:main} follows easily from the next lemma.
\begin{lemma}[\bf Certain cross-curvatures vanish in any Riemannian setting]
\label{L:zero cross-curvature}
Define the pseudo-metric \eqref{metric h} using the cost $c=d^2/2$ on a Riemannian
manifold $(M,d)$. Each point $(x, \bar x) \in N=M \times M \setminus \cutlocus$
with $x\ne \bar x$ admits a $2$-plane whose cross-curvature vanishes. For example,
letting $t \in [0,1] \to \gamma(t) \in M$ be a geodesic from $x=\gamma(0)$
to  $\bar x = \gamma(1)$ yields
$ 
\cross(\dot \gamma (0), \dot \gamma (1)) = 0
$ 
but
$$
h(\dot \gamma (0) \oplus \dot \gamma (1),  \dot \gamma (0) \oplus \dot \gamma (1)) = d(x,\bar x)^2.
$$
\end{lemma}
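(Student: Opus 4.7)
The plan is to apply Lemma~\ref{L:mixed fourth derivative} to a two-parameter variation built from a single minimizing geodesic joining $x$ and $\bar x$, exploiting the fact that along such a geodesic $c=\tfrac12\dist^2$ reduces to an explicit quadratic polynomial in the parameter.

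First, let $\gamma:(-\varepsilon,1+\varepsilon)\to M$ be a smooth extension of the unique minimizing geodesic with $\gamma(0)=x$ and $\gamma(1)=\bar x$. Shrinking $\varepsilon>0$, one arranges that $\gamma$ remains minimizing on every subinterval $[s,1+t]$ with $|s|,|t|<\varepsilon$, and that $(\gamma(s),\gamma(1+t))\in N$ throughout, by the openness of $N=M\times M\setminus\cutlocus$. Set $x(s):=\gamma(s)$ and $\bar x(t):=\gamma(1+t)$, so that $\dot x(0)=\dot\gamma(0)$ and $\dot{\bar x}(0)=\dot\gamma(1)$. The next step is to verify that $\sigma(s):=(x(s),\bar x)$ is an $h$-geodesic in the fiber $N(\bar x)\times\{\bar x\}$. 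Since for $c=\tfrac12\dist^2$ the cost-exponential coincides with the Riemannian exponential, one may write $\gamma(s)=\exp_{\bar x}((s-1)\dot\gamma(1))$, exhibiting $\gamma$ as the image under $c\text{-Exp}_{\bar x}$ of a linear segment in $T_{\bar x}M$. By the fiber-symmetric version of Lemma~\ref{L:c-segments are geodesics} (exchanging the roles of $M$ and $\bar M$), $\sigma$ is therefore an affinely parametrized null geodesic of $h$, so Lemma~\ref{L:mixed fourth derivative} applies to the surface $(s,t)\mapsto(\gamma(s),\gamma(1+t))$.

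The main calculation is then immediate: for $(s,t)$ near $(0,0)$, the extended geodesic is still minimizing between its endpoints, so
\begin{equation*}
c(\gamma(s),\gamma(1+t))=\tfrac12(1+t-s)^2\,|\dot\gamma|^2.
\end{equation*}
Hence $\partial_s^2 c=|\dot\gamma|^2$ is independent of $t$, and the mixed fourth derivative vanishes. By Lemma~\ref{L:mixed fourth derivative},
$\cross(\dot\gamma(0),\dot\gamma(1))=0$.
Meanwhile $\partial_s\partial_t c\big|_{(0,0)}=-|\dot\gamma|^2=-d(x,\bar x)^2$, and reading $h$ from its defining matrix \eqref{metric h} yields
\begin{equation*}
h(\dot\gamma(0)\oplus\dot\gamma(1),\dot\gamma(0)\oplus\dot\gamma(1))
= -\bar D D c(\dot\gamma(0),\dot\gamma(1))
= d(x,\bar x)^2,
\end{equation*}
which is strictly positive since $x\neq\bar x$, confirming that the $2$-plane in question is not $h$-null.

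The only non-mechanical point is Step 2, justifying that $\sigma$ is a null $h$-geodesic; but this reduces to the standard fact that a Riemannian geodesic through $\bar x$ is the exponential image of a line through the origin in $T_{\bar x}M$. Everything else is a direct computation enabled by the alignment of both variations along a single geodesic of $M$.
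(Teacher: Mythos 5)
Your argument is correct and is essentially the paper's own proof: both restrict $c=\tfrac12\dist^2$ to the surface $(s,t)\mapsto(\gamma(s),\gamma(t'))$ swept by the single minimizing geodesic, observe via Lemma~\ref{L:c-segments are geodesics} that one coordinate curve is an $h$-geodesic, and apply Lemma~\ref{L:mixed fourth derivative} to the explicit quadratic $\tfrac12(t'-s)^2 d(x,\bar x)^2$ to get both the vanishing fourth mixed derivative and $h=-\partial^2 c/\partial s\partial t'=d(x,\bar x)^2$. The only cosmetic differences are that you extend the geodesic slightly and verify the geodesy of the first-factor curve (using the symmetry of $c$), whereas the paper works directly on $[0,1]^2$ and uses the second-factor curve $t\mapsto(x,\gamma(t))$.
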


\begin{proof}
Any (affinely parameterized) Riemannian geodesic satisfies
$d(\gamma(s),\gamma(t)) =  |s-t| d(\gamma(0),\gamma(1))$.
Defining
$
f(s,t) = c(\gamma(s),\gamma(t)) = |s-t|^2 d(x,\bar x)^2/2
$
allows us to compute
\begin{equation}\label{geodesicIP}
h(\dot \gamma (0) \oplus \dot \gamma (1),  \dot \gamma (0) \oplus \dot \gamma (1))
= - \frac{\partial^2 f}{\partial s \partial t} \bigg|_{(s,t) = (0,1)}
\end{equation}
immediately.  Moreover,  the fact that $t \in [0,1] \longrightarrow (x,\gamma(t))$
is an $h$-geodesic yields the vanishing of
$\cross(\dot \gamma (0), \dot \gamma (1)) = -2\partial^4 f/\partial s^2 \partial t^2|_{(s,t)=(0,1)}$
via Lemma~\ref{L:mixed fourth derivative}.
\end{proof}

\begin{proof}[\bf Proof of Theorem \ref{T:main}(4)]
Without loss of generality, assume $M_+$ fails to be non-negatively cross-curved.
Then $\cross_+(p_+, \bar p_+) < 0$ for some
$(p_+, \bar p_+) \in T_{(x_+, \bar x_+)} N_+ $. Noting
\begin{align*}
h_+(p_+ \oplus \pm \bar p_+,p_+ \oplus \pm \bar p_+ ) & = \pm h_+(p_+ \oplus \bar p_+,p_+ \oplus  \bar p_+ )
\end{align*}
but
$\cross_+(p_+, \pm \bar p_+) = \cross_+(p_+, \bar p_+)$, we may assume that
$h_+(p_+ \oplus  \bar p_+, p_+ \oplus  \bar p_+ ) \le 0$.
Pick any nontrivial geodesic $\gamma_-$ in $M_-$.
Noting
$h_-(\dot\gamma_- (0) \oplus \dot \gamma_- (1), \dot\gamma_-(0)\oplus \dot\gamma_-(1))= d^2 (\gamma (0), \gamma(1)) > 0$
from (\ref{geodesicIP}),
one can choose  $\lambda \in \mathbf{R}$ so that
\begin{align*}
&h( (p_+ \oplus \lambda \dot \gamma_- (0) ) \oplus (\bar p_+ \oplus \lambda \dot\gamma_-(1)), (p_+ \oplus \lambda \dot \gamma_- (0) ) \oplus (\bar p_+ \oplus \lambda \dot\gamma_-(1)))\\
&= h_+(p_+ \oplus \bar p_+, p_+ \oplus \bar p_+) + \lambda^2 h_-(\dot\gamma_- (0) \oplus \dot \gamma_- (1), \dot\gamma_-(0)\oplus \dot\gamma_-(1))\\
&= 0.
\end{align*}
However, from \eqref{sum of cross} and Lemma~\ref{L:zero cross-curvature},
\begin{align*}
&\cross(p_+ \oplus \lambda \dot \gamma_- (0), \bar p_+ \oplus \lambda \dot\gamma_-(1))\\
&= \cross_+(p_+, \bar p_+) + \lambda^4 \cross_-(\dot\gamma_- (0), \dot \gamma_-(1)) \\
& = \cross_+(p_+, \bar p_+ ) < 0.
\end{align*}
\noindent This completes the proof that $(N_+ \times N_-,h_+ \oplus h_-)$
fails to be weakly regular {\bf A3w} unless both $(N_\pm,h_\pm)$ are non-negatively
cross-curved.
\end{proof}

\noindent At present, the authors do not know any Riemannian manifold which is {\bf A3w}
yet fails to be non-negatively cross-curved.
On the other hand, the proof of Theorem~\ref{T:main} (4) adapts to
non-Riemannian cost functions as in the next example, where it shows
that the tensor product of two (or more) costs on manifolds
which are not all non-negatively cross-curved is surely not {\bf A3w}.


\begin{example}[\bf A3s $\times$ A3s $\nRightarrow$ A3w]\label{E:A3w product}
Let $c_\pm(x,\bar x) = -\log|x-\bar x|$ on $M_\pm \times M_\pm \setminus \Delta$,
$M_\pm=\R^n$, $\Delta:=\{(x, x) \ | \ (x, x) \in M_\pm \times M_\pm \}$.
This logarithmic cost function is known to be {\bf A3s}
\cite{MTW} \cite {TW} but it does not induce non-negatively cross-curved pseudo-metric as
indicated, e.g.\ in \cite[Example 3.5]{KM2}. To see this fact one
uses \eqref{Alternative A3}, whose righthand-side coincides for this logarithmic cost with
\begin{align*}
p^i p^j f_{ij} \Big{|}_{(Df)^{-1} (-q_0^*-q^*)}
= 2 ((q_0+q)^*_i p^i)^2 - |p|^2 |(q_0+q)^*|^2
\end{align*}
where $f(x-\bar x) := -\log|x- \bar x|$. Here the righthand-side is strictly convex
with respect to $q^*$ along the Euclidean line parallel to $p$, but strictly concave
along the nullspace of $p$.
As a result  $\cross_\pm(p_\pm, \lambda \bar p_\pm ) < 0$ for
$p_\pm$ parallel to $\bar D D c(x,\bar x) \bar p_\pm$ as Euclidean vectors
and $\lambda\ne0$. On the other hand,
$$
h_+(p_+ \oplus \bar p_+,p_+ \oplus \bar p_+) +
h_-(p_- \oplus \lambda \bar p_-, p_- \oplus \lambda \bar p_-) \\
$$
vanishes for some non-zero $\lambda \in \R$.
From \eqref{sum of cross}, the pseudo-Riemaniann metric $h$ induced by
the cost $c( (x,\bar x), (y, \bar y) )= c_+(x_+, \bar x_+) + c_- (x_-, \bar x_-)$
on $(M_+ \times M_- ) \times (M_+ \times M_-)$ then fails to be {\bf A3w}. 
\end{example}


\begin{remark}[\bf tensor product examples, Loeper's maximum principle, and
time-convexity of the sliding mountain] \label{R:products and DASM}

As  mentioned in the introduction, Loeper's maximum principle ({\bf DASM})
is a key property for the regularity of optimal maps. The conclusions of
Theorem~\ref{T:A3w and DASM} and Corollary~\ref{C:cross and global time-convex}
(hence {\bf DASM}) hold for the distance squared cost on the
Riemannian product of round spheres
$M= \bar M= {\mathbf S}^{n_1} \times \cdots \times {\mathbf S}^{n_k}\times \R^l$ --- thus also on its Riemannian
submersions (see Theorem~\ref{T:time-convex and submerstion}).  To see this, first note that by the result of present section and Section~\ref{S:sphere cross-curv}, $M$ satisfies non-negative cross-curvature on $N = M \times \bar M \setminus \cutlocus$.
The density condition of $\cap_{0 \le t \le 1} V(x, \bar x(t))$ is easily checked since the cut
locus of one point in this example is a smooth submanifold of codimension greater than or
equal to $2$. This new global result illustrates an advantage of our method over
other approaches \cite{TW} \cite{TW2} \cite{L},
where one would require a regularity result for optimal maps (or some a priori estimates)
to obtain the conclusion of Theorem~\ref{T:A3w and DASM}.
For example, to implement these other
approaches for the manifolds of this tensor product example, one would need to establish that an optimal map
remains \emph{uniformly away from the cut locus}, as is currently known only
for a single sphere $M = \bar M = {\mathbf S}^n$ from work of
Delan\"oe \& Loeper \cite{DL}
(alternately \cite{L} or Appendix of \cite{KM2}), for the case of perturbations
\cite{DG}\cite{LV}\cite{V3},
and for some Riemannian submersion quotients (see Section~\ref{S:regularity and submersion})
of $\mathbf{S}^n$, including the complex projective spaces ${\bf CP}^n$ with Fubini-Study metric.
To the best of our knowledge, no one has yet succeeded in establishing regularity results
for this tensor product example, though as mentioned above
we hope to resolve this in a subsequent work.
\end{remark}

\section{Riemannian submersions and cross-curvature}\label{S:submersion}
In this section we prove the assertion 2) in Theorem~\ref{T:main}.
The key result is an O'Neill type inequality contained in Theorem~\ref{T:cross and submersion}
which compares cross-curvatures in Riemannian submersion.
Theorem~\ref{T:time-convex and submerstion} deals with the survival of global properties
such as Loeper's maximum principle ({\bf DASM}) and time-convexity of the sliding mountain,
under  Riemannian submersion.
These results are applied to show the regularity of optimal maps on the Riemannian submersion quotients of the round sphere, for example, the complex projective space ${\bf CP}^n$ with Fubini-Study metric (see Section~\ref{S:regularity and submersion}). From now on we focus on the case of Riemannian manifolds
(with $c=\frac{1}{2}\dist^2$). In this case  the $c$-exponential map coincides with the
Riemannian exponential map: $\cexp = \exp$.

Recall the definition and basic facts of Riemannian submersion.
\begin{definition}[\bf Riemannian submersion]\label{D:submersion} (See \cite{CE}.)
A surjective differentiable map $\pi: M \to B$ from a Riemannian manifold $M$ onto a Riemannian manifold $B$ is said to be a \emph{Riemannian submersion} if the following hold:
\begin{itemize}
\item $\pi$ is a submersion, i.e. $d \pi: T_x M \to T_{\pi (x)} B$ is surjective
for each $x\in M$;
\item for the orthogonal decomposition $T_x M = \ker d \pi  \oplus (\ker d \pi )^\bot$ for each $x \in M$, $d \pi \big{|}_{(\ker d \pi )^\bot}$ is an isometry.
\end{itemize}
The subspaces $\mathcal{V}:=\ker d \pi$, $\mathcal{H}:=(\ker d \pi)^\bot$ are called
\emph{vertical} and \emph{horizontal} subspaces, respectively.
For each $ v \in T_{b} B$, $b \in B$, there exists its unique \emph{horizontal lift} $\tilde v \in T_x M \cap \mathcal{H}$ for each $x\in \pi^{-1}(b)$ such that $d\pi (\tilde v) = v$.
We use the metric identifications $T_b^* B = T_b B$ and $T_x^*M = T_x M$
to extend the definition of a horizontal lift to cotangent vectors.
For each piecewise smooth curve $\gamma: [0,1] \to B$ and $x \in \pi^{-1}(\gamma (0))$, there exists its \emph{horizontal lift} $\tilde \gamma: [0,1] \to M$ such that
$\gamma(0)= x$, $\pi (\tilde \gamma) = \gamma$, $\dot {\tilde \gamma} (t)  \in \mathcal{H}$. Moreover, $\gamma$ is a  geodesic if and only if its horizontal lift $\tilde \gamma$ is. If $\gamma$ is minimal, then so is $\tilde{\gamma}$.
This property  yields, for the horizontal lifts $\tilde v \in T_{ x} M \cap \mathcal{H}$ and
$ x \in \pi^{-1} (b)$ of  $v \in T_b B$ and $b \in B$,
\begin{align}\label{exp lift}
\pi(\exp_{ x} \tilde v) = \exp_b v .
\end{align}
Regarding Riemannian distance,
\begin{align}\label{distance and submersion}
\hbox{\rm dist}_M (x, y) \ge \hbox{\rm dist}_B (\pi(x), \pi (y)).
\end{align}
We call $M$ the \emph{total space} of the Riemannian submersion and $B$ the \emph{base} of the Riemannian submersion or the \emph{Riemannian submersion quotient}.
\end{definition}
\noindent Many examples of Riemannian submersions may be found in Cheeger \& Ebin \cite{CE}
or Bess\cite{Be}.
Every Riemannian covering projection is obviously a Riemannian submersion.
Other important examples are Hopf fibrations such as complex and quaternionic
projective spaces ${\bf CP}^m$ and ${\bf HP}^m$,  where the standard round sphere $S^n$
(sectional curvature $\equiv 1$) is the total space.
\begin{example}[\bf Hopf fibrations]\label{E:Hopf fibration}(See \cite{Be} pages 257--258.)
$\pi: S^{2m+1} \to {\bf C P}^m$,
$\pi: S^{4m+3} \to {\bf HP}^m$.
The base spaces ${\bf CP}^m$ and ${\bf HP}^m$ have real dimensions $2m$, $4m$,
respectively, and have non-isotropic sectional curvatures $K$, $1 \le K \le 4$.
\end{example}

Our goal in this section is to compare the cross-curvature of the base space with that of
the total space of Riemannian submersion. For this purpose, we use the definition above
to assign to each pair of points in the base space a corresponding pair of points
horizontally lifted to the total space.

\begin{definition}[\bf horizontal lift of a pair of points]\label{D:horizontal lift of a pair}
Let $\pi : M \to B$ be a Riemannian submersion.
For each pair of points $(x, \bar x) \in N_B = B \times B \setminus \cutlocus$, the unique geodesic $\gamma: [0,1] \to B$ with $\gamma(0)= x$, $\gamma (1) = \bar x$ has
its horizontal lift $\tilde \gamma$ with $\tilde \gamma(0)= \tilde x$, $\tilde\gamma(1)=\tilde{\bar x}$.
Then the pair $(\tilde x, \tilde{\bar x}) \in N_M = M \times M \setminus \cutlocus$ is said to be a \emph{horizontal lift of $(x, \bar x)$}.
\end{definition}

To have a bit more general conclusion in the next theorem, we observe as in \cite{M} 
\begin{lemma}[\bf radial cost-exponential][see \cite[Theorem 13]{M}]\label{L:radial cost-exponential}
Let $f: \R \to \R$ be a  strictly convex smooth function, thus the derivative $f'$ has an inverse function. Further assume that $f$ is strictly increasing on $\R_+$.
Let $M = \bar M$ be a Riemannian manifold, and $c(x, \bar x) = f(\mathop{\rm dist}(x,\bar x))$
a cost function defined on $M \times \bar M$, where $\mathop{\rm dist}$ denotes the
Riemannian distance. The $\cexp$ and $\exp$ have the relation
\begin{align}\label{radial c-exp}
\cexp_x (p^*) = \left\{ %
\begin{array}{ll}\exp_x \big{(} \frac{(f')^{-1} (|p|)}{|p|} p \big{)} & \hbox{ \ for  $0 \ne p^* \in \dom \cexp_x$},\\
 x & \hbox{ \ for $p^* = 0 $}
 \end{array}%
 \right.
 \end{align}
where the vector $p$ and the co-vector $p^*$ are identified by the Riemannian metric.
This cost $c$ induces a pseudo-Riemannian metric
$h_M$ on $N_M := M \times M \setminus \cutlocus$ as in \eqref{metric h}
which is {\bf A2} non-degenerate and {\bf A1} bi-twisted.
\end{lemma}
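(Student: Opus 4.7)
The plan is to reduce everything to a direct chain-rule computation of $D_x c(x,\bar x)$ combined with Gauss's lemma, and then to observe that the resulting expression is readily invertible, yielding both the formula for $\cexp_x$ and the non-degeneracy/twist conditions as simple consequences.

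Fix $x \in M$ and $\bar x \in M \setminus \cutlocus(x)$ with $\bar x \ne x$. Setting $v := \exp_x^{-1}(\bar x) \in T_x M$, so $|v|=\dist(x,\bar x)$, Gauss's lemma identifies the gradient of the Riemannian distance as $\nabla_x \dist(x,\bar x)= -v/|v|$. The chain rule applied to $c(x,\bar x)=f(\dist(x,\bar x))$ then gives
\begin{equation*}
p^* := -D_x c(x,\bar x) = f'(|v|)\,\frac{v}{|v|},
\end{equation*}
under the standard metric identification of covectors with vectors. Hence $|p|=f'(|v|)$; strict convexity combined with strict monotonicity of $f$ on $\R_+$ ensures that $f'$ is strictly increasing and positive on $\R_+$, so $(f')^{-1}$ is a smooth diffeomorphism onto its image and $|v|=(f')^{-1}(|p|)$. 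Therefore $v = \frac{(f')^{-1}(|p|)}{|p|}\,p$, and applying $\exp_x$ recovers the stated formula for $\cexp_x(p^*)$; the identity $\cexp_x(0)=x$ is either a continuous extension (when $f'(0)=0$) or a convention consistent with the removal of the diagonal (when $f'(0)>0$, in which case $c$ is not $C^1$ on the diagonal anyway).

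With this formula in hand, \textbf{A1} and \textbf{A2} follow easily. The radial map $\phi: p \mapsto \frac{(f')^{-1}(|p|)}{|p|}\,p$ is a smooth diffeomorphism between its domain in $T_x^*M$ and its image in $T_xM$, because $t \mapsto (f')^{-1}(t)/t$ is smooth and positive on the relevant interval. Combined with the standard fact that $\exp_x$ is a smooth diffeomorphism from its injectivity domain onto $M \setminus \cutlocus(x)$, we conclude that $\cexp_x = \exp_x\circ\phi$ is a smooth diffeomorphism from $\dom(\cexp_x)$ onto $\bar N(x)$. Injectivity of $\bar x \mapsto -D_xc(x,\bar x)$ is therefore equivalent to injectivity of $\cexp_x^{-1}$, giving \textbf{A1} in the first argument, and the symmetry $c(x,\bar x)=c(\bar x,x)$ supplies the \textbf{A1} twist in the second argument. \textbf{A2} non-degeneracy of $\bar D D c$ follows because the differential of $\cexp_x$ factors as the composition of the non-singular differentials of $\phi$ and $\exp_x$. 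The only nontrivial ingredient is Gauss's lemma, which is standard away from the cut locus; everything else is bookkeeping, so no serious obstacle is anticipated beyond careful treatment of the $p^*\to 0$ limit according to the sign of $f'(0)$.
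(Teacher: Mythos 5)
Your proposal is correct and follows essentially the same route as the paper: the paper writes $-Dc(x,\bar x)=\frac{f'(\dist)}{\dist}\,(-D(\tfrac{1}{2}\dist^2))(x,\bar x)$ and invokes the well-known identity for the quadratic cost, which is exactly your chain-rule-plus-Gauss's-lemma computation with the quadratic case made explicit, and in both cases {\bf A1}/{\bf A2} are read off from the invertibility of the resulting radial reparametrization composed with $\exp_x$. The only difference is bookkeeping detail (your discussion of the $p^*\to 0$ limit and the diffeomorphism factorization), which the paper leaves to the reader.
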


\begin{proof}
For $f(\rho) = \rho^2$, this fact is well-known. For general $f$,
it follows from
\begin{align*}
-D c (x, \bar x) = -\frac{f'(\mathop{\rm dist}(x, \bar x))}{\mathop{\rm dist}(x, \bar x)} D (\frac{\hbox{\rm dist}^2}{2})(x, \bar x),
\end{align*}
where the inverse $(f')^{-1}$ exists for strictly convex function $f$.
The last assertion can be checked directly from Definition~\ref{D:A1,2}. This completes the proof.
\end{proof}

Now we are ready to show our main theorem in this section.
\begin{theorem}[\bf cross-curvature and Riemannian submersion]\label{T:cross and submersion}
Let $\pi: M \to  B$ be a Riemannian submersion from $M$ to $B$.
Let $f: \R \to \R$ be a strictly convex smooth function that is strictly increasing on $\R_+$, 
and $c_M = f \circ {\mathop{\rm dist}}_M$ and $c_B=f\circ {\mathop{\rm dist}}_B$
be cost functions defined on $M \times M$ and $B \times B$,
where $\mathop{\rm dist}$ denotes the Riemannian distance of the corresponding
manifold; $c_M$ and $c_B$
induce the pseudo-Riemannian metrics $h_M$ and $h_B$
on $N_M := M \times M \setminus \cutlocus$ and
$N_B:=B \times B \setminus \cutlocus$ respectively, as in \eqref{metric h}.
Fix $(x, \bar x) \in N_B$ and
let $(\tilde x, \tilde{\bar x}) \in N_M$ be a horizontal lift of $(x, \bar x)$.
Given $v \oplus \bar v \in T_{(x,\bar x)} N_B$ there exists
$\tilde w \oplus \tilde {\bar w} \in T_{(\tilde x, \tilde{\bar x})} N_M$
with $d\pi_{\tilde x} (\tilde w) = v$, $d\pi_{\tilde{\bar x}} (\tilde{\bar w}) = \bar v$,
 such that
\begin{equation}\label{metric lift}
h_B(v \oplus \bar v , v \oplus \bar v)
= h_M(\tilde w \oplus \tilde{\bar w},  \tilde w \oplus \tilde{\bar w})
\end{equation}
and
\begin{equation}\label{O'Neill}
\hbox{\rm cross}^{(N_B, h_B)}_{(x, \bar x)} (v, \bar v)
 \ge \hbox{\rm cross}_{(\tilde x, \tilde {\bar x})}^{(N_M, h_M)}
(\tilde{w}, \tilde {\bar w} ).
\end{equation}
For example, it suffices to take $\tilde w^*=-\bar D D c_M(\tilde x,\tilde {\bar x}) \tilde {\bar w} \in T^*_{\tilde x}M$
to be the horizontal lift of $v^*=-\bar D D c_B(x,{\bar x}) {\bar v} \in T^*_{x}B$
and $\tilde{\bar w}^*= -D \bar D c_M(\tilde x, \tilde {\bar x}) \tilde w \in T^*_{\tilde {\bar x}}M$
to be the horizontal lift of $\bar v^*=-D \bar D c_B(x, {\bar x}) v \in T^*_{\bar x}B$.
\end{theorem}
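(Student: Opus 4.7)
The plan is to realize both cross-curvatures as mixed fourth derivatives of their respective cost functions via Lemma~\ref{L:mixed fourth derivative}, to lift the chosen curves on $N_B$ horizontally into $N_M$ in a way that makes the total-space cost dominate the base-space cost pointwise while both costs agree along two transverse curves through the base point, and finally to extract the sign of the mixed fourth derivative of the difference by a nested-minimum argument.

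\textbf{Step 1 (curves and their horizontal lifts).} Pick $h_B$-geodesics $\sigma_B(s)=(x(s),\bar x)$ and $\tau_B(t)=(x,\bar x(t))$ in $N_B$ with $\dot x(0)=v$ and $\dot{\bar x}(0)=\bar v$. By Lemma~\ref{L:c-segments are geodesics} (and its analogue with the two factors swapped), these $h_B$-geodesics correspond to affine lines $p^*_B(s)=-Dc_B(x,\bar x)+s\bar v^*$ in $T^*_{\bar x}B$ and $q^*_B(t)=-\bar Dc_B(x,\bar x)+tv^*$ in $T^*_xB$ under $\bar c_B\text{-Exp}_{\bar x}$ and $c_B\text{-Exp}_x$ respectively. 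Lift these cotangent lines horizontally to affine lines $\tilde p^*_M(s)$ in $T^*_{\tilde{\bar x}}M$ and $\tilde q^*_M(t)$ in $T^*_{\tilde x}M$. Using $-Dc=-f'(\dist)D\dist$ and the fact that the gradient of $\dist_M$ at either endpoint of the horizontal minimizing geodesic from $\tilde{\bar x}$ to $\tilde x$ is the horizontal lift of the corresponding $\dist_B$-gradient (since that geodesic is tangent to the horizontal subspace and has the same length as its projection), one gets $\tilde p^*_M(0)=-Dc_M(\tilde x,\tilde{\bar x})$ and $\tilde q^*_M(0)=-\bar Dc_M(\tilde x,\tilde{\bar x})$. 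Setting
\[
\tilde x(s):=\bar c_M\text{-Exp}_{\tilde{\bar x}}\bigl(\tilde p^*_M(s)\bigr),\qquad
\tilde{\bar x}(t):=c_M\text{-Exp}_{\tilde x}\bigl(\tilde q^*_M(t)\bigr),
\]
Lemma~\ref{L:c-segments are geodesics} turns $(\tilde x(s),\tilde{\bar x})$ and $(\tilde x,\tilde{\bar x}(t))$ into $h_M$-geodesics in $N_M$.

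\textbf{Step 2 (projections, initial velocities, metric identity).} The identity \eqref{exp lift} ensures $\pi(\tilde x(s))=x(s)$ and $\pi(\tilde{\bar x}(t))=\bar x(t)$. Differentiating at $s=t=0$ and invoking Remark~\ref{D:differential cexp} identifies $\dot{\tilde x}(0)=-(D\bar Dc_M)^{-1}\tilde{\bar w}^*=\tilde w$ and $\dot{\tilde{\bar x}}(0)=-(\bar DDc_M)^{-1}\tilde w^*=\tilde{\bar w}$, which yields the projection identities $d\pi(\tilde w)=v$ and $d\pi(\tilde{\bar w})=\bar v$. The metric identity \eqref{metric lift} then reduces to
\[
h_M(\tilde w\oplus\tilde{\bar w},\tilde w\oplus\tilde{\bar w})=\langle\tilde w^*,\tilde w\rangle=\langle v^*,d\pi(\tilde w)\rangle=\langle v^*,v\rangle=h_B(v\oplus\bar v,v\oplus\bar v),
\]
using that a horizontally lifted covector annihilates vertical vectors and restricts on horizontal vectors to the underlying covector composed with $d\pi$.

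\textbf{Step 3 (pointwise comparison and sign extraction).} Set $F(s,t):=c_M(\tilde x(s),\tilde{\bar x}(t))-c_B(x(s),\bar x(t))$. The distance-non-increasing property \eqref{distance and submersion} and the strict monotonicity of $f$ on $\R_+$ give $F\geq 0$ near $(0,0)$. Along the $s$-axis, the horizontal geodesic from $\tilde{\bar x}$ to $\tilde x(s)$ is minimizing in $M$ for small $s$ and has the same length as the base geodesic from $\bar x$ to $x(s)$, so $\dist_M(\tilde{\bar x},\tilde x(s))=\dist_B(\bar x,x(s))$ and $F(s,0)\equiv 0$; symmetrically $F(0,t)\equiv 0$. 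The main obstacle is that $F\geq 0$ with $F$ vanishing on the two axes does not by itself force $F_{sstt}(0,0)\geq 0$ --- the counterexample $(s^2-t^2)^2$ shows as much --- so one needs a finer argument. The resolution is a nested-minimum observation: for each fixed $s$ the slice $t\mapsto F(s,t)$ is nonnegative with $F(s,0)=0$, so $t=0$ is a minimum and $g(s):=F_{tt}(s,0)\geq 0$; but $g(0)=F_{tt}(0,0)=0$ because $F(0,\cdot)\equiv 0$, so $s=0$ is in turn a minimum of $g$, forcing $g''(0)=F_{sstt}(0,0)\geq 0$. Applying Lemma~\ref{L:mixed fourth derivative} to each of $(N_B,h_B)$ and $(N_M,h_M)$ converts this into
\[
\cross^{(N_B,h_B)}(v,\bar v)-\cross^{(N_M,h_M)}(\tilde w,\tilde{\bar w})=2\,F_{sstt}(0,0)\geq 0,
\]
which is exactly \eqref{O'Neill}.
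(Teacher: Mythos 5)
Your proposal is correct, and its skeleton is the same as the paper's: lift the two $c$-segments horizontally via the radial form of the cost-exponential, set $F(s,t)=c_M(\tilde x(s),\tilde{\bar x}(t))-c_B(x(s),\bar x(t))$, use \eqref{distance and submersion} plus monotonicity of $f$ to get $F\ge 0$ with $F(s,0)\equiv 0\equiv F(0,t)$, and convert $F_{sstt}(0,0)\ge 0$ into \eqref{O'Neill} by Lemma~\ref{L:mixed fourth derivative}. Where you diverge is in the last two steps, and both variants are sound. For the sign extraction, the paper Taylor-expands $F$ to fourth order and evaluates along the diagonals $t=\pm s$ (and $t=s^2$) to kill the lower coefficients and isolate $f_{22}\ge 0$; your nested-minimum argument ($g(s):=F_{tt}(s,0)\ge 0$, $g(0)=0$, hence $g''(0)=F_{sstt}(0,0)\ge 0$) is cleaner and reaches the same conclusion directly. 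Note, though, that your framing of the ``obstacle'' is off: $(s^2-t^2)^2$ does not vanish on the axes ($F(s,0)=s^4$), so it is not a counterexample to ``$F\ge 0$ and $F$ vanishing on both axes force $F_{sstt}(0,0)\ge 0$''; indeed that implication is true for $C^4$ functions, and your own argument is precisely its proof --- it only shows that nonnegativity with a zero at the origin alone would not suffice. For the metric identity \eqref{metric lift}, the paper reads it off from the vanishing of the Taylor coefficient $f_{11}=\partial^2 F/\partial s\,\partial t(0,0)$, whereas you derive it directly from the fact that a horizontally lifted covector pairs with $\tilde w$ through $d\pi$; this is equally valid and arguably more transparent, since it uses only $d\pi(\tilde w)=v$, which you establish via \eqref{exp lift} exactly as the paper does. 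One harmless slip: in Step 1 you interchange $D$ and $\bar D$ (the line in $T^*_{\bar x}B$ should start at $-\bar D c_B(x,\bar x)$, not $-Dc_B(x,\bar x)$, and symmetrically for the other), but the construction you then carry out is the intended one.
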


\begin{proof}[Proof of \eqref{O'Neill}]
Let $(\tilde x,\tilde{\bar x})$ be a horizontal lift
of $(x,\bar x) \in N_B$, and define $q^*$ and $\bar q^*$ by
$\cBexp_x q^* = \bar x$ and $\cBexp_{\bar x} \bar q^* = x$.
Then $\dist_B(x,\bar x) = \dist_M (\tilde x,\tilde{\bar x})$ and it follows from
(\ref{radial c-exp}) that
the horizontal lifts $\tilde q^*$ of $q^*$ and $\tilde {\bar q}^*$ of $\bar q^*$
satisfy $\cMexp_{\tilde x} \tilde q^* = \tilde{\bar x}$ and
$\cMexp_{\tilde{\bar x}} \tilde{\bar q}^*= \tilde x$.
To fixed $v \in T_x B$ and $\bar v \in T_{\bar x}B$
correspond $\bar v^* \in T^*_{\bar x} B$ and $v^* \in T^*_x B$ such that
$\bar v^*= -D \bar D c_B(x, {\bar x}) v$ and 
$v^*= - \bar D D c_B(x,{\bar x}) {\bar v}$.
Equivalently, $v^* \oplus \bar v^* = 2h_B(v \oplus \bar v, \;\cdot\;)$.

Now let $\Sigma: (s,t) \in \mathopen[-1,1\mathclose]^2 \longrightarrow (x(s),\bar x(t)) \in N_B$
be the surface given by $x(s) = \cBexp_{\bar x}( \bar q^* + s \bar v^*)$ and
$\bar x (t)= \cBexp_x(q^* + t v^*)$. By Lemma~\ref{L:c-segments are geodesics},
the curves $\sigma(s)=(x(s), \bar x)$ and
$\tau(t) = (x,\bar x(t))$ through 
$(x(0),\bar x (0))= (x, \bar x)$ are $h_B$-geodesics.
Lift $\Sigma$ to
$\tilde \Sigma: (s,t) \in \mathopen[-1,1\mathclose]^2 \longrightarrow (\tilde x(s),\tilde{\bar x}(t)) \in N_M$
by setting
$\tilde x (s) = \cMexp_{\tilde{\bar x}} (\tilde{\bar q}^* + s \tilde{\bar w}^*)$,
and $\tilde{\bar x}(t)=\cMexp_{\tilde x} (\tilde q^* + t \tilde w^*)$, where
$\tilde {\bar w}^*$ and
$\tilde {w}^*$ are the horizontal lifts of $\bar v^*$ and $v^*$ respectively.
Thus,
$\tilde \sigma (s)=(\tilde x (s), \tilde{\bar x})$ and $\tilde \tau (t) =(\tilde x, \tilde{\bar x}(t))$ are $h_M$-geodesics, with
$\tilde \sigma (0)=(\tilde x, \tilde{\bar x}) = \tilde \tau (0)$.
Moreover, $\pi (\tilde x (s)) = x(s)$ and $\pi(\tilde{\bar x}(t)) = \bar x (t)$
from \eqref{radial c-exp} and \eqref{exp lift}, so taking $\tilde w := \dot {\tilde x}(0)$ and
$\tilde {\bar w} := \dot{ \tilde {\bar x}}(0)$ yields
$d\pi_{\tilde x} (\tilde w) = \dot x(0)=v$ and
$d\pi_{\tilde {\bar x}}(\tilde {\bar w}) = \dot {\bar x}(0)=\bar v$.
Notice that
\begin{align*}
-D \bar D c_B (x(0),\bar x) \dot x (0) &= \bar v^*, \qquad -\bar D D c_B(x,\bar x(0)) \dot{\bar x}(0) = v^* \ ;\\
-D \bar D c_M (\tilde x(0),\tilde {\bar x }) \dot{\tilde x}(0) &= \tilde{\bar w}^*, \qquad  -\bar D D c_M(\tilde x, \tilde {\bar x}(0))\dot{\tilde{\bar x}}(0)= \tilde w^*.
\end{align*}
Define an auxiliary function
\begin{align*}
F(s, t) :=  c_M (\tilde x (s), \tilde{\bar x} (t))-c_B(x(s), \bar x(t)).
\end{align*}
From Lemma~\ref{L:mixed fourth derivative}, the inequality $\frac{\partial^4}{\partial s ^2 \partial t^2} F(s, t) \big{|}_{(0,0)} \ge 0$
will imply (\ref{O'Neill}).
First, observe from \eqref{distance and submersion} and the monotonicity of $f$ that $c_B(x(s), \bar x (t)) \le c_M (\tilde x (s), \tilde{\bar x}(t))$, thus,
\begin{equation}\label{nonnegative f(s,t)}
F(s,t) \ge 0 \quad {\rm for} \quad (s, t) \in [-1,1]^2.
\end{equation}
We shall verify the desired inequalities by computing the Taylor expansion of
$F$ at $(0,0)$ to fourth order.
First observe that $F(s,0) = 0 = F(0,t)$ for all $|s|,|t| \le 1$, so
$$F(s,t) = f_{11} s t + f_{21} s^2 t + f_{12}s t^2 + f_{31} s^3 t + f_{22} s^2 t^2 + f_{13}s t^3
+ O(|s| + |t|)^{5}
$$
as $|s| + |t| \to 0$.
Since $F(s,\pm s) \ge 0$ for $|s| \le 1$, we deduce the
vanishing of $f_{11},f_{12}$ and $f_{21}$ in turn, and the
inequalities $f_{31} + f_{22} + f_{13} \ge 0$ and $f_{22} - f_{31} - f_{13} \ge 0$.
This implies $f_{22} \ge 0$ as desired.
(Although not needed here, $f_{31}=0$ follows from $F(s,s^2) \ge 0$, and $f_{13}$ vanishes
similarly.) Noting
\begin{align*}
f_{11}
&= \frac{\partial^2 F}{\partial s \partial t}(0,0) \\
&= \dot {\tilde x}(0) \bar D D c_M (\tilde x, \tilde {\bar x}) \dot {\tilde {\bar x}}(0)
 - \dot        x(0) \bar D D c_B (x,\bar x) \dot        {\bar x}(0) \\
&= - h_M(\dot {\tilde x}(0) \oplus \dot {\tilde {\bar x}}(0), \dot {\tilde x}(0) \oplus \dot {\tilde {\bar x}}(0))
   +  h_B(\dot        x(0) \oplus \dot        {\bar x}(0), \dot        x(0) \oplus \dot        {\bar x}(0))
\end{align*}
we have established (\ref{metric lift}) en passant
to complete the proof.
\end{proof}

Before stating a corollary of this theorem, let's make a provisional definition
which can serve as a strict cross-curvature condition for a Riemannian manifold.
Notice from Lemma~\ref{L:zero cross-curvature} that for each pair of points in a
Riemannian manifold there are tangent vectors with zero cross-curvature.

\begin{definition}[\bf almost positive cross-curvature]\label{D:almost cross-curvature}
A Riemannian manifold $M$ with positive sectional curvature
is said to be \emph{almost positively cross-curved}
if  for each $(x, \bar x) \in N=M \times M \setminus \cutlocus$ such that $x \ne \bar x$ and $p \oplus \bar p \in T_{(x, \bar x)} N$,
\begin{equation}\label{almost positive}
\cross(p, \bar p) \ge 0
\end{equation}
and the equality holds if and only if $p$ and $\bar p$ are parallel to the velocity vectors
$\dot \gamma(0)$ and $\dot \gamma(1)$, respectively, for the unique geodesic
$t \in [0,1] \to \gamma(t) \in M$ from $x$ to $\bar x$.
\end{definition}

\noindent For example, the standard round sphere is almost positively cross-curved as shown in Section~\ref{S:sphere cross-curv}.

\begin{corollary}[\bf {\bf A3w/\bf A3s}, non-negative/almost positive cross-curvature survive Riemannian submersion]\label{C:cross and submersion} Let $\pi: M \to B$ be a Riemannian submersion.
If the distance squared cost on $M$ satisfies {\bf A3w}, {\bf A3s}, non-negative cross-curvature, or almost positive cross-curvature condition, then
$B$ satisfies the same condition, respectively.
\end{corollary}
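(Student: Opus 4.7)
The plan is to apply Theorem~\ref{T:cross and submersion} pointwise, treating all four cases uniformly. Fix $(x, \bar x) \in N_B$ and an arbitrary $v \oplus \bar v \in T_{(x,\bar x)} N_B$; let $(\tilde x, \tilde{\bar x}) \in N_M$ be a horizontal lift and let $\tilde w \oplus \tilde{\bar w}$ denote the lift constructed in Theorem~\ref{T:cross and submersion}. The two relations
$$h_B(v \oplus \bar v, v \oplus \bar v) = h_M(\tilde w \oplus \tilde{\bar w}, \tilde w \oplus \tilde{\bar w}), \qquad \cross^{(N_B, h_B)}_{(x,\bar x)}(v, \bar v) \ge \cross^{(N_M, h_M)}_{(\tilde x, \tilde{\bar x})}(\tilde w, \tilde{\bar w}),$$
together with $d\pi_{\tilde x}(\tilde w) = v$ and $d\pi_{\tilde{\bar x}}(\tilde{\bar w}) = \bar v$, will yield all four statements in a line or two each.

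First I would dispatch non-negative cross-curvature: the right-hand side of the cross-curvature inequality is $\ge 0$ by hypothesis, so the left-hand side is too. For \textbf{A3w}, a null vector downstairs lifts to a null vector upstairs by the norm identity, so \textbf{A3w} on $M$ gives $\cross_M \ge 0$, hence $\cross_B \ge 0$. For \textbf{A3s}, note that $v \ne 0$ forces $\tilde w \ne 0$ (since $d\pi_{\tilde x}(\tilde w) = v$), and likewise for $\bar v$ and $\tilde{\bar w}$; strict positivity of $\cross_M(\tilde w, \tilde{\bar w})$ therefore propagates to $\cross_B(v, \bar v)$.

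The step requiring slightly more care is the almost positive cross-curvature case. Non-negative cross-curvature on $B$ has just been handled, and positive sectional curvature on $B$ follows from O'Neill's classical formula $K_B \ge K_M + \tfrac{3}{4}|A|^2$. For the equality clause, suppose $x \ne \bar x$ and $\cross_B(v, \bar v) = 0$. The inequality then forces $\cross_M(\tilde w, \tilde{\bar w}) \le 0$, and the non-negative cross-curvature on $M$ upgrades this to equality. By almost positivity on $M$, the vectors $\tilde w$ and $\tilde{\bar w}$ are parallel to $\dot{\tilde\gamma}(0)$ and $\dot{\tilde\gamma}(1)$ for the unique minimizing geodesic $\tilde\gamma$ from $\tilde x$ to $\tilde{\bar x}$ — a geodesic which exists and is unique precisely because the horizontal lift of the unique minimizer $\gamma$ from $x$ to $\bar x$ is itself minimal and $(\tilde x, \tilde{\bar x}) \in N_M$. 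Projecting by $d\pi$ and using $d\pi(\dot{\tilde\gamma}(s)) = \dot\gamma(s)$ produces $v \parallel \dot\gamma(0)$ and $\bar v \parallel \dot\gamma(1)$. The converse direction combines Lemma~\ref{L:zero cross-curvature}, which supplies $\cross_B(\dot\gamma(0), \dot\gamma(1)) = 0$, with the bi-quadratic scaling $\cross(\alpha p, \beta \bar p) = \alpha^2 \beta^2 \cross(p, \bar p)$ read off from Definition~\ref{D:cross-curv}.

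The main obstacle is essentially bookkeeping: correctly identifying the specific horizontal lift produced in Theorem~\ref{T:cross and submersion} (where it is the cotangent vectors $\tilde w^*$ and $\tilde{\bar w}^*$ that are horizontal lifts, identified with tangent vectors via the Riemannian metric) and verifying that uniqueness of minimizing geodesics is transported along the submersion. Once these identifications are in place, each of the four claims collapses to a direct application of the O'Neill-type inequality \eqref{O'Neill} and its companion norm identity \eqref{metric lift}.
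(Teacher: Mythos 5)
Your proposal is correct and follows essentially the same route as the paper's proof: all four conditions are deduced pointwise from the norm identity \eqref{metric lift} and the O'Neill-type inequality \eqref{O'Neill} of Theorem~\ref{T:cross and submersion}, and the equality clause of almost positive cross-curvature is handled exactly as in the paper, by lifting the minimizing geodesic, using the cotangent-lift construction to get $d\pi_{\tilde x}(\tilde w)=v$, $d\pi_{\tilde{\bar x}}(\tilde{\bar w})=\bar v$, and projecting the parallelism back down. Your additional remarks (positive sectional curvature of $B$ via the classical O'Neill formula, and the converse direction of the equality case via Lemma~\ref{L:zero cross-curvature} and bi-quadratic scaling) are fine details the paper leaves implicit.
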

\begin{proof}
The relevant inequalities for the cross-curvature follow directly from \eqref{O'Neill}.
Let's consider especially  the equality case of almost positive cross-curvature.
Assume $M$ is almost positively cross-curved.  Suppose
$$\cross_{(x, \bar x)}^{(N_B, h_B)}(p, \bar p) = 0$$ for $x \neq \bar x$, $(x, \bar x) \in N_B$,
$p\oplus \bar p \in T_{(x,\bar x)} N_B$.
Lift the unique minimizing geodesic $\gamma$ linking $\gamma(0)=x$ to $\gamma(1) = \bar x$
to a horizontal geodesic $\tilde \gamma$ joining $\tilde x = \tilde \gamma(0)$
to $\tilde {\bar x} = \tilde \gamma(1)$.
There is a unique choice $\tilde p \oplus \tilde {\bar p} \in T_{(\tilde x, \tilde{\bar x})}N_M$
such that each component $\tilde p^*$ 
and $\tilde {\bar p}^*$ 
of $\tilde p^* \oplus \tilde {\bar p}^* = h_M(\tilde p \oplus \tilde {\bar p}, \cdot)$
is the horizontal lift of the corresponding component of
$p^* \oplus \bar p^* = h_B(p \oplus \bar p, \cdot)$.
The preceding theorem asserts
$d \pi_{\tilde x} (\tilde p) = p$ and $d \pi_{\tilde {\bar x}}(\tilde {\bar p}) = \bar p$.
Apply \eqref{O'Neill} to get $\cross_{(\tilde x, \tilde{\bar x})}^{(N_M, h_M)}(\tilde p, \tilde{\bar p}) = 0$.
Then by almost positive cross-curvedness of $M$, the vectors $\tilde p$ and $\tilde{\bar p}$ are
parallel to the velocities $\dot{\tilde \gamma} (0)$ and $\dot{\tilde \gamma}(1)$, respectively.
This implies that the vector projections $p$ and $\bar p$ are parallel to
$\dot \gamma (0) = d\pi_{\tilde x}(\dot {\bar \gamma}(0))$ and
$\dot \gamma (1) = d\pi_{\tilde {\bar x}}(\dot {\bar \gamma}(1))$, respectively,
to complete the proof.
\end{proof}

Let's now turn to more global aspects of the distance squared cost function
under Riemannian submersion.
Though {\bf local DASM}/{\bf local time-convex sliding mountain} are equivalent
to {\bf A3w}/nonnegative cross-curvature, respectively (see Theorem~\ref{T:cross and DASM} and Theorem~\ref{T:cross and local time-convex}), their global counterparts  {\bf DASM/time-convex sliding mountain} require additional conditions on the geometry of the domain (see Theorem~\ref{T:A3w and DASM} and Corollary~\ref{C:cross and global time-convex}). The following theorem,
uses a simple comparison of distance to give a direct proof that both
Loeper's maximum principle and time-convexity of the sliding mountain
survive Riemannian submersion even in the absence of restrictions on domain geometry.

\begin{theorem}[\bf Loeper's maximum principle and time-convexity of the sliding mountain survive Riemannian submersion]\label{T:time-convex and submerstion}
Let $\pi: M \to  B$ be a Riemannian submersion.
Compose a smooth strictly convex function $f:\R \longrightarrow \R$ that is strictly increasing on $\R_+$, 
with the Riemannian distance on $M$ to define a cost function $c_M = f \circ \mathop{\rm dist_M}$.
It induces a pseudo-Riemannian metric
$h_M$ on $N_M := M \times M \setminus \cutlocus$ as in \eqref{metric h}
which is {\bf A2} non-degenerate and {\bf A1} bi-twisted.
Similarly $c_B = f \circ \mathop{\rm dist_B}$ defines a non-degenerate and bi-twisted
pseudo-metric $h_B$ on $N_B := B \times B \setminus \cutlocus$.
Suppose that for each $h_M$-geodesic of the form $t \in [0,1] \longrightarrow (\tilde x, \tilde {\bar x}(t)) $ in $N_M$,
 $\tilde f_t(\tilde y) = -c_M(\tilde y,\tilde {\bar x}(t)) + c(x,\tilde {\bar x}(t))$ satisfies
\eqref{DASM} (or \eqref{global time-convex} respectively) for each $\tilde y \in M$. Then for each $h_B$-geodesic $t \in [0,1] \longrightarrow (x, \bar x(t)) \in N_B$, $f_t(y) = -c_B(y,\bar x(t)) + c_B(x,\bar x(t))$ satisfies the same inequality for each $y \in B$.
\end{theorem}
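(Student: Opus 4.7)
The plan is to reduce everything to the hypothesis on $M$ by lifting the $h_B$-geodesic horizontally and exploiting the fact that $c_B$ is obtained from $c_M$ by a fiberwise minimum. First, given the $h_B$-geodesic $t \mapsto (x,\bar x(t))$, write $\bar x(t) = \cBexp_x((1-t)p^* + tq^*)$ for some $p^*,q^* \in T^*_xB$ via Lemma~\ref{L:c-segments are geodesics}. Choose any $\tilde x \in \pi^{-1}(x)$ and denote by $\tilde p^*, \tilde q^* \in T^*_{\tilde x}M$ the horizontal lifts of $p^*, q^*$. Since $\cMexp$ is Riemannian-radial (Lemma~\ref{L:radial cost-exponential}), $d\pi$ is a fiberwise isometry on the horizontal distribution, and $\pi(\exp_{\tilde x}\tilde v) = \exp_x(d\pi_{\tilde x}\tilde v)$ for horizontal $\tilde v$ by \eqref{exp lift}, the curve $\tilde{\bar x}(t):=\cMexp_{\tilde x}((1-t)\tilde p^* + t\tilde q^*)$ projects onto $\bar x(t)$ for every $t$. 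Applying Lemma~\ref{L:c-segments are geodesics} in $M$ then shows $\tilde\sigma:t \mapsto (\tilde x,\tilde{\bar x}(t))$ is an $h_M$-geodesic inside $N_M$, where membership in $N_M$ follows because the horizontal lift of a minimizing geodesic is minimizing.

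The next step is to establish the fiberwise identity
\[
f_t(y) \;=\; \max_{\tilde y \in \pi^{-1}(y)} \tilde f_t(\tilde y), \qquad y \in B.
\]
This rests on the claim that $c_B(y,\bar x(t)) = \min_{\tilde y \in \pi^{-1}(y)} c_M(\tilde y, \tilde{\bar x}(t))$: the inequality $\le$ comes from \eqref{distance and submersion} combined with monotonicity of $f$, while $\ge$ is realized by taking a minimizing geodesic from $\bar x(t)$ to $y$ in $B$, lifting it horizontally starting at $\tilde{\bar x}(t)$, and using the length-preserving property of the horizontal lift. Combined with the equality $c_B(x,\bar x(t)) = c_M(\tilde x,\tilde{\bar x}(t))$, which holds for the same reason, the identity for $f_t(y)$ follows upon flipping signs.

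Finally, apply the hypothesis to $\tilde\sigma$: for every $\tilde y \in M$ the lifted sliding mountain $\tilde f_t(\tilde y)$ satisfies \eqref{DASM} (resp.~\eqref{global time-convex}). Taking the maximum over $\tilde y \in \pi^{-1}(y)$ and commuting the two maxima in the \textbf{DASM} case gives
\[
f_t(y) = \max_{\tilde y}\tilde f_t(\tilde y) \le \max_{\tilde y}\max[\tilde f_0(\tilde y),\tilde f_1(\tilde y)] = \max[f_0(y),f_1(y)];
\]
for time-convexity one invokes the subadditivity $\max(au+bv) \le a\max u + b\max v$ valid for $a,b \ge 0$ to obtain
\[
f_t(y) \le \max_{\tilde y}\bigl[(1-t)\tilde f_0(\tilde y) + t\tilde f_1(\tilde y)\bigr] \le (1-t)f_0(y)+tf_1(y).
\]
The main conceptual obstacle is the fiberwise max identity: it is the one step that genuinely uses the Riemannian (not just smooth) submersion structure, since the argument for the $\ge$ direction requires that $B$-minimizers lift to $M$-minimizers. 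Once this identity is in hand, the theorem reduces to elementary manipulations with maxima over the fiber.
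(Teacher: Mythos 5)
Your proposal is correct and follows essentially the same route as the paper: lift the $h_B$-geodesic horizontally to an $h_M$-geodesic based at $\tilde x$, then compare $f$ with $\tilde f$ using the submersion distance inequality \eqref{distance and submersion} together with the fact that minimizing geodesics lift to minimizing horizontal geodesics. The only cosmetic difference is that you package the comparison as the fiberwise identity $f_t(y)=\max_{\tilde y\in\pi^{-1}(y)}\tilde f_t(\tilde y)$ and then commute maxima, whereas the paper works directly with the single maximizing lift $\tilde y$ (the endpoint of the horizontal lift, starting at $\tilde{\bar x}(t)$, of the minimal geodesic from $\bar x(t)$ to $y$), for which $f_t(y)=\tilde f_t(\tilde y)$ and $\tilde f_s(\tilde y)\le f_s(y)$ for all $s$ --- exactly the point realizing your maximum.
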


\begin{proof}
Let $t \in [0,1] \longrightarrow \sigma (t)= (x, \bar x(t)) \in N_B$
be an $h_B$-geodesic and set $\bar x:=\bar x(0)$.
Define the sliding mountain $f_t(\cdot):=-c_B(\cdot, \bar x(t)) + c_B(x, \bar x(t))$ on $B$.
Identify tangent vectors with co-tangent vectors by the Riemannian metric.
By Lemma \ref{L:c-segments are geodesics}
there exist $p, q \in T_x B$ such that $\bar x(t)= \cexp_x (p + t q)$.
Lift
$p, q$ to horizontal vectors $\tilde p, \tilde q$ at $\tilde x \in \pi^{-1} (x)$.
Let $\tilde{\bar x}(t) =\cexp_{\tilde x} (\tilde p + t \tilde q)$. From the Riemannian submersion property and Lemma~\ref{L:radial cost-exponential},
\begin{align*}
\hbox{$c_B(x, \bar x(t))= (f')^{-1}(|p+tq|_B) 
=c_M (\tilde x, \tilde{\bar x}(t))$ for all $t \in [0,1]$;}
\end{align*}
and  $(\tilde x,\tilde{\bar x} (t)) \in \bar N_M$ for $t \in [0,1]$. This last property
comes from the fact that Riemannian submersions lift the minimal geodesic from $x$ to $\bar x (t)$ to the  minimal geodesic from $\tilde x$ to $\tilde{ \bar{ x}}(t)$.  Thus,
$t\in [0,1] \to (\tilde x, \tilde{\bar x}(t)) \in N_M$ gives an $h_M$-geodesic.
Define
$\tilde f _t (\cdot)= -\tilde c ( \cdot, \tilde{\bar x} (t)) + c(\tilde x, \tilde{\bar x} (t))$.
Fix $t \in [0,1]$, $y \in B$. Let $\gamma:[0,1] \to B$ be a geodesic from $\gamma(0)= \bar x(t)$ to $\gamma(1)=y$. Let  $\tilde \gamma$ be the horizontal lift of $\gamma$ such that $\tilde{\gamma}(0)= \tilde{\bar x}(t)$. Let $\tilde y := \tilde{\gamma}(1) \in \pi^{-1}(y)$. Notice that
\begin{align*}
\hbox{$c_B(y, \bar x(t))=c_M(\tilde y, \tilde{\bar x}(t))$; \ \ \
$c_B(y, \bar x (s)) \le c_M (\tilde y, \tilde{\bar x}(s))$ for all $s \in [0,1]$.}
\end{align*}
The last inequality is from \eqref{distance and submersion} and the monotonicity of $f$.
Therefore,
\begin{align}\label{comparison fB fM}
f_t (y) = \tilde f_t (\tilde y) \ ;\ \ \ \tilde f_s (\tilde y) \le f_s (y)  \hbox{ for all $s \in [0,1]$}.
\end{align}
Now, assume $s \in [0,1] \longrightarrow \tilde f_s(\tilde y)$ is convex.
Choosing $s=0,1$,
from \eqref{comparison fB fM},
\begin{align*}
f_t (y) = \tilde f_t (\tilde y)
&\le (1-t) \tilde f_0 (\tilde y) + t \tilde f_1 (\tilde y)\\
& \le  (1-t) f_0 ( y) + t  f_1 ( y).
\end{align*}
Since $t \in [0,1]$ was arbitrary,  the same convexity holds for
$t\in[0,1] \longrightarrow f_t(y)$.
The survival of Loeper's maximum principle ({\bf DASM}) follows by a similar argument.
\end{proof}
\section{Regularity of optimal maps on the complex projective space ${\bf CP}^n$}\label{S:regularity and submersion}
In this section, we establish regularity of optimal maps on the complex projective space ${\bf CP}^n$, in fact more generally on the Riemannian submersion quotient of the round sphere with strictly convex domain of exponential map.

From Corollary~
\ref{C:cross and submersion} and Theorem~\ref{T:time-convex and submerstion}, any Riemannian
submersion quotient $B$ of the round sphere satisfies, in particular, {\bf A3s} and
Loeper's maximum principle ({\bf DASM}).
Let's further assume that the domain of exponential map $\dom(\text{Exp}_{x} )$
is strictly convex for each $x \in B$. Here the domain of exponential map is the
special case of \eqref{dom c-exp} with $c= \frac{1}{2}\dist^2$. We also identify vectors and co-vectors by the Riemannian metric. The Riemannian
manifolds ${\bf CP}^n$ and ${\bf HP}^n$ in Example~\ref{E:Hopf fibration} all
satisfy this condition.
In these cases, the domain of the exponential map is the ball of radius $\frac{\pi}{2}$ in the tangent space,
and the conjugate locus coincides with the cut locus.  Thus these manifolds
are focal, in contrast to the nonfocal manifolds analyzed by Loeper and Villani in the preprint version of \cite{LV}.\footnote{Remark added in revision:
  Shortly after we communicated the present manuscript to Loeper and Villani,  we learned they had revised \cite{LV} to address
{\bf A3s} manifolds which are not purely-focal.
Our results of the preceding section establish
${\bf CP}^n$ and ${\bf HP}^n$ to be {\bf A3s}, hence examples of such manifolds.  Combining their revision with Theorem \ref{T:cross and submersion},  one may obtain regularity of
optimal transportation on any Riemannian submersion of the round sphere simply by showing 
that no minimizing geodesic on the submersed manifold is purely focal.}
Exactly the same method as in Appendix E of \cite{KM2},
which uses Loeper's argument \cite{L}, shows the H\"older continuity of optimal maps
for the source and target measures $\rho$, $\bar \rho$ on $B$, assuming there exists
$\lambda>0$ such that
$$
\lambda \le \frac{d\bar \rho}{d\vol} \quad {\rm and} \quad \frac{d\rho}{d\vol} \le 1/\lambda,
$$
or any of the weaker hypotheses proposed by Loeper \cite{L}.
Notice that the distance function is not differentiable
anywhere on the cut-locus (at least in ${\bf CP}^n$ and ${\bf HP}^n$).
Otherwise, there is a nonzero gradient $\nabla_y \dist (x_0, y)$ of the distance function at $y$ in the cut-locus of $x_0$. Since the domain of exponential is the round ball, moving from $y$ along the direction $\nabla_y \dist (x_0, y)$ should decrease the distance, a contradiction. Therefore, the continuity result we obtained also shows that optimal maps stay uniformly away from the cut-locus, which
enables one to apply the  method of Ma, Trudinger and Wang \cite{MTW} as done by
Loeper \cite{L2} to get higher regularity of optimal maps for smoother source and target
measures.

It is natural to expect,  but unknown to us, whether
every Riemannian submersion quotient of the round sphere satisfies the strict convexity
of the domain of exponential map which is the only additional condition required for the method of the preceding
paragraph  to apply (at least for continuity of optimal maps).

For completeness we mention that for the case of a covering map (Riemannian submersion with
discrete fibers) of the round sphere, it is known that lifting of the measures on $B$ to
the total space $M$ can be applied to show the regularity of optimal maps using
established regularity results \cite{L2} on the round sphere. This was discovered
independently by Delano\"e and Ge in Appendix C of \cite{DG}.
An alternative approach (in the same spirit to our discussion above) to this covering case has also been
given by Figalli and Rifford \cite{FR}.


\section{Sphere is almost positively cross-curved}\label{S:sphere cross-curv}
In this section we show our final result, namely
that the standard round sphere is almost positively cross-curved.
This represents a significant advance over Loeper's discovery that the
round sphere satisfies {\bf A3s}. Its proof will require the following elementary lemma.

\begin{lemma}[Calculus fact]\label{the function}
For $0 \le \rho \le \pi$, the function
\begin{align*}
a(\rho):=\sin^2\rho+ \rho \sin \rho -\rho^2 (1+ \cos \rho)
\end{align*}
satisfies $a(\rho)  \ge 0$. Moreover $a(\rho) = 0$ if and only if $\rho = 0, \pi$.
\end{lemma}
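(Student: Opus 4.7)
My plan is to reduce the inequality to one on the smaller interval $[0,\pi/2)$ by pulling out a factor that accounts for the vanishing at $\rho=\pi$, and then bound the residual from below using two very standard Taylor-type estimates for $\sin$ and $\tan$.

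Substituting $\rho=2u$ and expanding with the half-angle identities $\sin\rho=2\sin u\cos u$ and $1+\cos\rho=2\cos^2 u$ gives
\begin{equation*}
a(2u)=4\sin^2 u\cos^2 u+4u\sin u\cos u-8u^2\cos^2 u
=4\cos u\bigl[\sin^2 u\cos u+u\sin u-2u^2\cos u\bigr].
\end{equation*}
On $[0,\pi/2)$ we have $\cos u>0$, so the bracket equals $\cos u\cdot F(u)$ with
$F(u):=u\tan u+\sin^2 u-2u^2$, and therefore $a(2u)=4\cos^2 u\,F(u)$. At $u=\pi/2$ (i.e., $\rho=\pi$) direct substitution gives $a(\pi)=0$, so it remains only to show $F(u)\ge 0$ on $[0,\pi/2)$ with equality exactly at $u=0$.

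For $F$, my plan is to use the classical inequalities $\tan u\ge u+u^3/3$ and $\sin u\ge u-u^3/6$, valid on $[0,\pi/2)$. The first follows from the computation $(\tan u-u-u^3/3)'=\tan^2 u-u^2=(\tan u-u)(\tan u+u)\ge 0$ together with vanishing at $u=0$; the second is standard, obtained by integrating $\cos t\ge 1-t^2/2$ twice. Multiplying the first by $u$ yields $u\tan u\ge u^2+u^4/3$. Since $u-u^3/6\ge 0$ on $[0,\pi/2]$, squaring the second gives $\sin^2 u\ge u^2-u^4/3+u^6/36$. Adding these and subtracting $2u^2$ produces
\begin{equation*}
F(u)\ge (u^2+u^4/3)+(u^2-u^4/3+u^6/36)-2u^2=\frac{u^6}{36}\ge 0.
\end{equation*}
Equality in this chain forces equality in both Taylor estimates, which happens only at $u=0$; hence $F(u)>0$ on $(0,\pi/2)$.

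Combining, $a(\rho)=4\cos^2(\rho/2)\,F(\rho/2)\ge \cos^2(\rho/2)\,\rho^6/144\ge 0$ on $[0,\pi)$, together with $a(\pi)=0$, yields $a(\rho)\ge 0$ throughout $[0,\pi]$, with $a(\rho)=0$ iff $\cos^2(\rho/2)=0$ or $F(\rho/2)=0$, i.e., iff $\rho=\pi$ or $\rho=0$. The main obstacle is locating the right algebraic simplification: $a$ vanishes to sixth order at $\rho=0$ but only to first order at $\rho=\pi$ (with derivative $-\pi$ there), so a naive Taylor-remainder approach on the whole interval $[0,\pi]$ is hopeless. Extracting $\cos^2(\rho/2)$ explicitly absorbs the boundary zero at $\pi$ and leaves a residual $F$ whose sharpness matches what the two-term Taylor estimates for $\sin$ and $\tan$ can deliver.
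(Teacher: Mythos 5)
Your proof is correct, and it takes a genuinely different route from the paper's. The paper reparametrizes $\rho=\pi/2+\arcsin\lambda$, divides $a$ by $(1-\lambda)$ to absorb the simple zero at $\rho=\pi$, and then runs a somewhat delicate monotonicity analysis of $b'(\lambda)$ (checking $b(-1)=b'(-1)=0$, that $b'\ge 3$ on $[\tfrac12,1]$, and that a weighted derivative of $b'$ is positive on $[-1,\tfrac12)$). You absorb the same boundary zero instead by the half-angle factorization $a(2u)=4\cos^2 u\,F(u)$ with $F(u)=u\tan u+\sin^2 u-2u^2$, and then dispose of $F$ by the elementary bounds $\tan u\ge u+u^3/3$ and $\sin u\ge u-u^3/6$, which give the clean quantitative estimate $F(u)\ge u^6/36$; this matches the sixth-order vanishing of $a$ at $\rho=0$ (indeed $a(\rho)\sim\rho^6/90$) and makes the equality characterization immediate. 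Your approach is more elementary and self-contained -- no monotonicity bookkeeping, and it yields an explicit lower bound -- while the paper's is shorter to state but harder to verify line by line. One trivial slip: from $F(\rho/2)\ge(\rho/2)^6/36$ the final display should read $a(\rho)\ge\cos^2(\rho/2)\,\rho^6/576$ rather than $/144$ (you dropped the factor $2^6=64$ from $(\rho/2)^6$); this has no bearing on the nonnegativity or on the identification of the equality cases $\rho=0,\pi$.
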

\begin{proof}
Reparameterize $\rho:=\pi/2 + \arcsin (\lambda)$ by $|\lambda| < 1$.
Then
\begin{align*}
& a(\pi/2 + \arcsin(\lambda))\\
& = 1-\lambda^2  + (\pi/2 + \arcsin(\lambda))\sqrt{1-\lambda^2}-(\pi/2 + \arcsin(\lambda))^2 (1-\lambda).
\end{align*}
Define
\begin{align*}
b(\lambda):= \frac{a(\pi/2 + \arcsin(\lambda))}{1-\lambda}.
\end{align*}
The assertion holds if $b(\lambda) > 0$, for $|\lambda|<1$.
From
$$
(1-\lambda) b'(\lambda) = 2-\lambda + (\frac{\pi}{2} + \arcsin \lambda)(2\lambda -1) /(1-\lambda^2)^{1/2}
$$
one can check $b(-1)=0$, $b'(-1)=0$, and $b'(\lambda) \ge 3$ if $\frac{1}{2} \le \lambda \le 1$.
Moreover,
$$\sqrt{1-\lambda^2}(1-2\lambda)^2 \frac{d}{d\lambda}\Big{(}\frac{(1-\lambda)\sqrt{1-\lambda^2}}{1-2\lambda}\frac{d b}{d\lambda} \Big{)}\\
=2(1-\lambda^2)(1+\lambda) > 0
$$
for $-1 \le \lambda < \frac{1}{2}$,  which shows $b'(\lambda)$ increases monotonically
in this range. Thus $b'(\lambda)$ and $b(\lambda)$ both remain positive throughout
$|\lambda|<1$, completing the proof.
\end{proof}

\begin{theorem}[\bf Sphere is almost positively cross-curved.]\label{T: sphere cross}
The $n$-dimensional sphere $M=S^n$ with the standard round metric (i.e., sectional curvature $K \equiv 1$) is almost positively cross-curved
\eqref{almost positive}, a fortiori non-negatively cross-curved.
%
\end{theorem}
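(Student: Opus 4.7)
The plan is to compute the cross-curvature of $(S^n,\dist^2/2)$ directly via Lemma~\ref{L:mixed fourth derivative} and to reduce positivity of the resulting expression to the elementary calculus in Lemma~\ref{the function}.

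Fix $(x,\bar x) \in N = S^n \times S^n \setminus \cutlocus$, write $\rho = \dist(x,\bar x) \in [0,\pi)$, and take any $p \oplus \bar p \in T_{(x,\bar x)}N$. Since $SO(n+1)$ acts transitively on pairs of points at fixed distance in $S^n$, I may normalise by placing $x$ at the north pole and $\bar x$ at a chosen point along a fixed meridian, reducing the problem to a one-parameter family indexed by $\rho$. Using Lemma~\ref{L:c-segments are geodesics}, I construct the $h$-geodesic $t \mapsto (x,\bar x(t))$ with $\dot{\bar x}(0) = \bar p$ by setting $\bar x(t) = \exp_x(P + tV)$, where $P := \exp_x^{-1}(\bar x) \in T_x S^n$ and $V \in T_x S^n$ is chosen so that the initial velocity at $t=0$ equals $\bar p$. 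By Lemma~\ref{L:mixed fourth derivative} only one of the two curves needs to be an $h$-geodesic, so I take $s \mapsto x(s)$ to be the Riemannian geodesic with $\dot x(0) = p$, whereupon
$$
\cross(p,\bar p) = -2 \, \frac{\partial^4}{\partial s^2 \partial t^2}\bigg|_{s=t=0} c(x(s),\bar x(t)).
$$

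Working in normal coordinates at $x$ and at $\bar x$ with the first axis aligned along the connecting geodesic $\gamma$, split $p = p_\parallel + p_\perp$ and (after parallel transport along $\gamma$) $\bar p = \bar p_\parallel + \bar p_\perp$. The explicit Jacobi-field structure on $S^n$---namely $J_\perp(s) = \sin(s) W(s)$ along a unit-speed geodesic, $J_\parallel(s) = s W(s)$---allows closed-form computation of $d(x(s),\bar x(t))^2$ to fourth order in $(s,t)$ near the origin, with coefficients depending only on $\rho$, $\sin\rho$, $\cos\rho$ and the components of $p,\bar p$. After differentiating, simplifying the contribution from $\ddot{\bar x}(0)$ via the $c$-segment equation \eqref{h-geodesic equation}, and collecting terms, the cross-curvature takes the form of a manifestly non-negative quadratic form whose sole a priori non-obvious coefficient---the one coupling the perpendicular components of $p$ and $\bar p$---matches the function
$$
a(\rho) = \sin^2\rho + \rho\sin\rho - \rho^2(1+\cos\rho)
$$
of Lemma~\ref{the function} up to a manifestly positive trigonometric factor. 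That lemma then delivers the inequality \eqref{almost positive}.

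For the equality case, Lemma~\ref{the function} shows $a(\rho) > 0$ for every $\rho \in (0,\pi)$, which is exactly the range corresponding to $x \ne \bar x$ with $(x,\bar x) \in N$. Hence the cross-curvature can vanish only when the perpendicular contributions themselves vanish, i.e.\ $p_\perp = 0 = \bar p_\perp$; by construction this is precisely the condition that $p$ and $\bar p$ be parallel to $\dot\gamma(0)$ and $\dot\gamma(1)$ respectively, matching Definition~\ref{D:almost cross-curvature}. The main obstacle is the bookkeeping in the fourth-derivative expansion: a naive computation produces many terms that individually look singular as $\rho \to 0$, and the delicate point is to exploit \eqref{h-geodesic equation} and the spherical Jacobi identities early enough that the final expression isolates $a(\rho)$ cleanly rather than through long cancellations between similar-looking terms.
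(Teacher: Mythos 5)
Your outline follows the same route as the paper's proof: apply Lemma~\ref{L:mixed fourth derivative} to the $h$-geodesic $t\mapsto(x,\exp_x(P+tV))$ and a Riemannian geodesic in $s$, use the spherical Hessian/Jacobi-field formula for $\mathop{\rm Hess}\,\dist^2(\cdot,\bar x(t))/2$, and reduce the sign question to the calculus fact of Lemma~\ref{the function}. However, the step you dismiss as ``bookkeeping'' is exactly the heart of the theorem, and the structure you assert for the final expression is not what the computation yields. After collecting terms and reducing to the $2$-plane $\Sigma$ spanned by $\hat r$ and $q$ (the paper's \eqref{H-1-1}--\eqref{2D H}), the quantity to be shown non-negative is, up to a positive factor, $P=A\,S^2-2(2B-A)T S+A\frac{\rho^2}{\sin^2\rho}T^2+(B-A)$ with $S=\tan\psi$, $T=\tan\theta$, $A=\frac{2(\sin\rho-\rho\cos\rho)}{\rho}$, $B=\frac{\rho-\sin\rho\cos\rho}{\sin\rho}$. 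Since $2B-A>0$, the mixed term is genuinely indefinite: $P$ is \emph{not} manifestly non-negative term by term, and $a(\rho)$ is not a coefficient weighting the coupling of $p_\perp$ with $\bar p_\perp$ (that coefficient is $B-A$, whose positivity \eqref{B-A} needs its own separate argument). Rather, $a(\rho)$ emerges only after a discriminant/completing-the-square step: one must check $A>0$, $B>0$, $A(B-A)>0$, and $(2B-A)^2-A^2\rho^2/\sin^2\rho\le 0$, the last factoring so that the decisive factor is $2B-A-A\frac{\rho}{\sin\rho}=-\frac{2}{\rho\sin\rho}\,a(\rho)$. None of these ingredients is supplied by your sketch, so the inequality \eqref{almost positive} is asserted rather than proved.

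Two further points. First, in dimension $n\ge 3$ a parallel/perpendicular splitting of $p$ and $\bar p$ relative to the connecting geodesic is not by itself the right reduction: the paper splits $w$ into its component in $\Sigma$ and the orthogonal remainder $w^\perp$, and discards the latter only because its coefficient $\ddot G$ is shown to be positive (\eqref{H-1}); within $\Sigma$ the analysis still involves both angles $\theta,\psi$, which is where the indefinite cross term lives. Second, your equality analysis inherits the same gap: since the two-dimensional expression is $\frac{\cos^2\theta\cos^2\psi}{\rho\sin\rho}P$ with $P>0$, vanishing first forces $w^\perp=0$ and $\cos\theta\cos\psi=0$, i.e.\ only that \emph{one} of $q,w_1$ is parallel to $\hat r$, and an additional check against \eqref{H-1-1} is needed to conclude the other is parallel as well; your claim that vanishing directly forces $p_\perp=0=\bar p_\perp$ rests on the unsubstantiated ``manifestly non-negative'' form. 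To make the proposal into a proof you would need to carry out the expansion, establish the positivity of $A$, $B$, and $B-A$, and run the discriminant argument in which Lemma~\ref{the function} actually enters.
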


\begin{proof}
This theorem follows from the following nontrivial (and tedious) calculations.
Let's first set up the geometric configuration we are going to analyze.
Let $x$ be a point in the round sphere $S^n$ of diameter $\pi$.
Fix two unit tangent vectors $q, w \in T_x S^n$, $|q|,|w| > 0$.
For $t \in \R$, with $|t|$ sufficiently small, let $r(t)$ be a line in $T_x S^n$ with $\dot r (t) = q$, $\ddot r (t) =0$, $ | r(t)| < \pi$, where
$\dot f$, $\ddot f$ denote the time derivatives $\frac{d}{dt}f (t)$, $\frac{d^2}{dt^2} f(t)$ of a function $f$. Let $\bar x (t)$ be the $c$-segment $\bar x(t):=\exp_x r(t)$. Denote $\bar x = \bar x (0)$, $\rho = |r(t)|$ (thus $ 0 \le \rho < \pi$), and $\hat r = \frac{r(t)}{|r(t)|}$. Let $\langle \cdot, \cdot \rangle $ denote the Riemannian inner product.

To apply Lemma~\ref{L:mixed fourth derivative},  define
\begin{align}\label{H-0}
H:=\mathop{\rm Hess}(\frac{\hbox{\rm dist}
 (\cdot, \bar x(t))^2}{2})\Big{|}_{x} (w, w).
\end{align}
   To prove \eqref{almost positive} we will first show $-\ddot H = -\frac{d^2}{dt^2} H \ge 0$, and then the equality case shall be determined.
   By continuity we may assume $0< \rho < \pi$ without loss of generality.
From a standard Riemannian geometry calculation (for example see \cite{DL}\cite{L2}), one can show
that
\begin{align}\label{H-special}
H=|w|^2- I \, G ,
\end{align}
where
\begin{align*}
I:=|w|^2-\langle \hat r, w \rangle^2, \  \ \ \
G:= 1-\frac{\rho \cos \rho}{\sin \rho}.
\end{align*}

\subsection*{Step 1: reduction to $2$-dimensional case}
One of the key points of the proof is to rearrange the expression of $-\ddot{H}$ in a clever way to enable further analysis.
Before differentiating $H$, let's list some preliminary computations in the order of complexity. Define a function
\begin{align*}g (u) = -\frac{u \cos u}{\sin u}, \qquad u \in ]0, \pi[.
\end{align*}
Then,
\begin{align*}
g'(u)\Big{|}_{u=\rho}= \frac{1}{\sin \rho}  B , & \qquad g''(u)\Big{|}_{u=\rho} = \frac{\rho}{\sin^3 \rho}A.
\end{align*}
where
\begin{align*}
A:=\frac{2(\sin \rho -\rho \cos \rho)}{\rho}, \qquad
B:=\frac{\rho-\cos \rho \sin \rho}{\sin \rho}.
\end{align*}
 Here one can check that  $A, B> 0$ for $0 < \rho < \pi$ and this will be important later.
We use these and the identities
\begin{align*}
\dot{\rho} = \langle \hat{r}, q \rangle, &\qquad \ddot{\rho} =\frac{1}{\rho}(|q|^2 - \langle \hat{r}, q \rangle^2 )
\end{align*}
 to do the following differentiations and rearrangements:
\begin{align}\label{G-0}
G & = \frac{\rho}{2\sin \rho} A, \\\nonumber
\dot{G} & = \frac{1}{\sin \rho} B \langle \hat{r}, q \rangle ,\\\nonumber
\ddot{G} & = \frac{\rho}{\sin^3 \rho} A \langle \hat{r}, q \rangle^2 +
\frac{1}{\rho \sin \rho}  B (|q|^2- \langle \hat{r}, q \rangle ^2 ),\\\nonumber
\dot{I} & = \frac{2}{\rho}(-\langle \hat{r}, w \rangle \langle q, w\rangle
 + \langle \hat{r}, w\rangle^2 \langle \hat{r}, q\rangle ), \\\nonumber
 \ddot{I} & = \frac{2}{\rho^2}
 (4 \langle \hat r, q\rangle \langle \hat{r}, w\rangle \langle q, w \rangle
 - 4 \langle \hat{r}, w\rangle ^2 \langle \hat{r}, q\rangle^2
 - \langle q, w\rangle^2 + \langle \hat{r}, w \rangle^2 |q|^2).
\end{align}
Key observations here are first,
\begin{align*}
 \ddot G > 0
\end{align*}
and second, the quantities $\dot I $, $\ddot I$ are independent of the normal component $w^\perp$ of $w$ to the plane $\Sigma \subset T_x S^n $ generated by $\hat{r}$ and $q$. Let $w_1 = w- w^\perp$ be the projection of $w$ to $\Sigma$. By separating $I = |w^\perp|^2 + |w_1|^2 - \langle \hat r,  w \rangle^2 $, one sees
\begin{align}\label{H-1}
-\ddot{H} &= \ddot{G} I + 2 \dot{G} \dot{I} + G \ddot{I}\\\nonumber
&= \ddot G |w^\perp|^2 - \ddot H_1 \\\nonumber
& \ge -\ddot H_1
\end{align}
where $H_1$ is the quantity defined by replacing $w$ in \eqref{H-0} with $w_1$, thus independent of $w^\perp$. Notice that the quantity $-\ddot{H_1}$ becomes identical to $-\ddot H$ of $r,q,w_1$ viewed as tangent vectors of the $2$-dimensional round sphere that is the exponential image of $\Sigma$ in the original sphere. This reduces the consideration to two dimensions.

We will need the following key expression obtained by \eqref{G-0} and rearrangement:
\begin{align}\label{H-1-1}
-\ddot{H_1} = \frac{1}{\rho \sin \rho}&  \Big{\{} \big{[}A \frac{\rho^2}{\sin^2 \rho} \langle \hat r, q \rangle^2 \,
+ B (|q|^2-\langle \hat r, q \rangle ^2 )\big{]} (|w_1|^2 - \langle \hat{r}, w_1 \rangle^2 )\\\nonumber
& \  \ +4(B-A)\big{(}\langle \hat r, q \rangle^2 \langle \hat r, w_1 \rangle^2 - \langle \hat r, q \rangle \langle \hat r, w_1 \rangle \langle q , w_1 \rangle \big{)}\\\nonumber
& \ \  + A \big{(} \langle \hat r, w_1 \rangle^2 |q|^2 - \langle q, w_1 \rangle^2\big{)} \Big{\}}.
\end{align}
 Note that
\begin{align}\label{B-A}
B-A= \frac{\rho^2 + \rho \sin \rho \cos \rho - 2 \sin^2 \rho}{\rho \sin \rho} > 0 \hbox{ \ \ for $0 < \rho < \pi$},
\end{align} as can be checked by taking the fourth-order derivative of the numerator.
At this point, Loeper's result in \cite{L2} that $S^n$ is {\bf A3s} can be obtained by
substituting $\langle q, w \rangle = \langle q, w_1 \rangle =0$ into the
expression \eqref{H-1-1} and using the second line of \eqref{H-1}.

\subsection*{Step 2: two dimensional case}
From \eqref{H-1} 
it suffices to show $-\ddot H_1 \ge 0$.
From now on, we assume without loss of generality the dimension is two, and
let $\hat r = (0, 1)$, $q=(\cos \theta , \sin \theta)$, $w_1=(\cos \psi, \sin \psi)$ in $\R^2 \cong T_x S^2$, with $0\le \theta, \psi \le 2 \pi$. Let $T:=\tan \theta$, $S:=\tan \psi$, $ -\infty \le T, S \le + \infty$. One checks from \eqref{H-1-1},
\begin{align}\label{2D H}
-\ddot H_1 = \frac{\cos^2 \theta \cos^2 \psi}{\rho \sin \rho} P,
\end{align}
where
\begin{align*}
P= A \,  S^2 - 2 (2B-A) T \, S + A \frac{\rho^2}{\sin^2 \rho} T^2 + B-A .
\end{align*}
Thus it suffices to show $P > 0$.
$P$ is a convex (since $A > 0$) quadratic polynomial in $S$ with discriminant
\begin{align*}
D:=& 4 (2B-A)^2 T^2 - 4 A (A\frac{\rho^2}{\sin^2 \rho} T^2 +B-A)\\\nonumber
=& 4\Big{\{} \big{(} (2B-A)^2 - A^2 \frac{\rho^2}{\sin^2 \rho} \big{)} T^2 - A(B-A)\Big{\}}.
\end{align*}
We show $D < 0$ (regardless of $T$), which implies $P > 0$. Since $A(B-A) > 0$, $D$ is always negative if
\begin{align*}
0 &\ge (2B-A)^2 - A^2 \frac{\rho^2}{\sin^2 \rho} \\
&= (2B-A + A\frac{\rho}{\sin\rho})(2B-A - A\frac{\rho}{\sin \rho}).
\end{align*}
The first factor is positive, and the second factor is negative since
\begin{align*}
2 B-A - A\frac{\rho}{\sin \rho} =
-\frac{2}{\rho \sin \rho} a(\rho),
\end{align*}
where
\begin{align*}
a(\rho):=\sin^2\rho+ \rho \sin \rho -\rho^2 (1+ \cos \rho)
\end{align*}
which is positive from Lemma~\ref{the function} (since $0 < \rho < \pi$). This establishes
the desired inequality (\ref{almost positive}).

\subsection*{Step 4: equality case}
Let us analyze the cases of equality,  to conclude the almost positive
cross-curvature \eqref{almost positive} of $S^n$.  We only need to show for $0 < \rho < \pi$ that
$-\ddot H =0$ holds if and only if
the three vectors
$q$, $w$, $\hat r$ at $T_x S^n$ are all parallel.
The necessity is easy to verify.
For sufficiency, suppose $-\ddot{H} = 0$. From \eqref{H-1}, $w^\perp =0$. Thus it reduces to two dimensional case as in Step 2. Now, from \eqref{2D H} and $P > 0$, $\cos \theta \cos \psi = 0$. Thus either $q$ or $w$ is parallel to $\hat{r}$. In either case examining with \eqref{H-1-1} shows the other vector is also parallel to $\hat{r}$.
 This establishes almost positivity of the cross-curvature of $S^n$.
\end{proof}



\bibliographystyle{plain}

\end{document}